\newtheorem*{theorem*}{Theorem}
\newtheorem*{definition*}{Definition}
\newtheorem{theorem}{Theorem}[section]
\newtheorem{proposition}[theorem]{Proposition}
\newtheorem{lemma}[theorem]{Lemma}
\newtheorem{corollary}[theorem]{Corollary}
\newtheorem{definition}[theorem]{Definition}
\newtheorem{example}[theorem]{Example}
\numberwithin{equation}{section}
\begin{document}

\title{Fibred toric varieties in toric hyperk\"{a}hler varieties}

\author{Craig Van Coevering \and Wei Zhang }

\thanks{Mathematics Classification Primary(2000): Primary 53C26, Secondary 53D20, 14L24.\\
\indent The second author is supported by Tian Yuan math Fund. and the Fundamental Research Funds for the Central Universities\\
\indent Keywords: fibred toric variety, toric hyperk\"{a}hler
variety, polytope, arrangement, chamber, core, natural morphism,
flip, Mukai flop}

\begin{abstract}

We introduce the fibred toric varieties as equivariant $\mathbb{C}P^r$ bundles over lower dimensional toric varieties. An equivalent characterization is that the natural morphisms on them degenerate to bundle projections in the context of variation of toric varieties as GIT quotients. Our main observation is that these fibred toric varieties also arise naturally in the variation of hyperk\"{a}hler varieties, namely, the fibred toric varieties are contained in the exceptional sets of the hyperk\"{a}hler natural morphisms and the Mukai flops.

\end{abstract}

\maketitle

\section{Introduction}

Toric varieties are defined by the combinatorial data of the fan (cf.
\cite{Fu93} or \cite{Od88}) as algebraic varieties, also studied by
Delzant (\cite{De88}) in symplectic quotient perspective, associated
to polytopes by moment maps, and later Guillemin (\cite{Gu94b}) proved
their canonical symplectic forms are in effect K{\"a}hler. More
over, the symplectic quotient has natural connection with the Geometric Invariant
Theory(short as GIT, cf. \cite{MFK94}).

Toric hyperk\"{a}hler varieties are quaternion analogue of toric
varieties,
which can be obtained as symplectic quotient of level set of the
holomorphic moment map. Using symplectic quotient technique, in
\cite{BD00}, Bielawski and Dancer studied their moment maps, cores,
cohomologies, etc. While Hausel and Sturmfels study the toric
hyperk{\"a}hler varieties from a more algebraic view
point(\cite{HS02}).

Choosing different level sets of moment map, conducting the symplectic quotient, we get different toric varieties
and toric hyperk\"{a}hler varieties respectively. It is natural to
ask how the varieties change as the values of moment maps change. The
symplectic quotient amounts to GIT quotient if we take the value of moment
map to be the linearization of the torus action on line bundle.
Thus the dependence of symplectic quotient to the level set is transferred to the dependence of GIT quotient to the linearization, which had been studied abstractly by Thaddeus in
\cite{Th96}, Dolgachev and Hu in \cite{DH98}. In the toric case, there is some related discussion on toric varieties in \cite{CLS11}, and Konno studied the
variation of toric hyperk\"{a}hler varieties: the natural morphisms
and Mukai flops, getting more special results(cf. \cite{Ko03},\cite{Ko08}).

%\subsection{Main results}
In this article, we give the definition of fibred toric variety, then show that it takes key position in both variations of toric varieties and toric hyperk{\"a}her varieties. The advantages in toric category are the computation can be made directly based on the definitions of GIT, and more importantly, the variation of GIT quotients can be visualized by the variation of the hyperplane arrangements which can offer us more geometric information. In detail, this article is organized as follows. In section 2, after the symplectic
quotient construction of toric variety, where the toric variety can be determined either by $\alpha$ a value of moment map or $\mathcal{A}$ a hyperplane arrangement, the fibred toric variety is defined.

\begin{definition}\label{def:eletor}
We call a $(r+s)$-dimensional toric variety $X$ fibred toric variety, if it is a $\mathbb{C}P^r$ bundle over
$s$-dimensional toric variety $X_1$, and the bundle projection map is $T^s_\mathbb{C}$ equivariant, i.e. the $T^s_\mathbb{C}$ torus action of $X_1$ lifted to $X$ as a subgroup of $T^{r+s}_\mathbb{C}$, making the following diagram commutes.
\[
\begin{CD}
X @>{\rm T^{s}_\mathbb{C} \subset T^{r+s}_\mathbb{C}}>> X \\
@VVV @VVV\\
X_1 @>{\rm T^{s}_\mathbb{C}}>> X_1\\
\end{CD}
\]

\end{definition}

In the end of section, the regularity of the toric
variety $X(\alpha)$ is precisely described by the chamber structure on $\mathfrak{m}^*_+$.

In section 3, we establish the GIT quotient construction of toric variety, which is equivalent to the former symplectic one. Then we focus on the variation of toric varieties as GIT quotient where the following theorem is proven. Here $\alpha^\pm$ are two regular value in adjacent chambers, $\alpha_1$ is a singular lying on the generic position of the wall and $\pi^\pm$ are the natural morphisms form toric varieties $X(\alpha^\pm)$ to $X(\alpha_1)$.

\begin{theorem}\label{thm:natmor}
(1) If we set $V_1 = \{[z] \in X(\alpha_1)| z \zeta = z, \ for \
\zeta\in G_1\}$, then $V_1$ is a toric variety.

\noindent (2) If we set $V^+ = (\pi^+)^{-1} (V_1)$, $V^- =
(\pi^-)^{-1} (V_1)$, then $\pi^\pm|_{V^\pm} : V^\pm \rightarrow V_1$
are fiber bundles whose fibers are biholomorphic to $\mathbb{C}P^{\#
J_1^\pm-1}$, where $\#J_1^+$ $\#J_1^-$ are the numbers of elements
in $J_1^+$ and $J_1^-$ respectively.

\noindent (3) Natural morphisms $\pi^\pm|_{X(\alpha^\pm)\backslash
V^\pm}: X(\alpha^\pm)\backslash V^\pm \rightarrow
X(\alpha_1)\backslash V_1$ are both biholomorphic maps.
\end{theorem}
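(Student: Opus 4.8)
The plan is to extract all three statements from the combinatorial data of the wall crossing set up in Section~3. Recall that in the GIT construction $X(\alpha)=\mathbb{C}^N/\!/_\alpha G$, where $G\cong(\mathbb{C}^*)^k$ acts on $\mathbb{C}^N$ through a fixed weight map and $T=(\mathbb{C}^*)^N/G$ is the residual torus acting on each $X(\alpha)$. The subgroup $G_1\subset G$ is the stabilizer that appears generically along the wall through $\alpha_1$; because $\alpha_1$ sits in generic position on the wall, $G_1\cong\mathbb{C}^*$, and after normalizing its weight the coordinate directions of $\mathbb{C}^N$ split into those on which $G_1$ acts with weight $+1$ (indexed by $J_1^+$), those with weight $-1$ (indexed by $J_1^-$), and those with weight $0$. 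The first thing I would verify from the Hilbert--Mumford / moment-map stability criterion is that $\alpha^+$-semistability forces at least one $J_1^+$ coordinate to be nonzero, $\alpha^-$-semistability forces at least one $J_1^-$ coordinate to be nonzero, while $\alpha_1$-semistability imposes neither condition. This is the only place where the chamber combinatorics of $\mathfrak{m}^*_+$ enters, and I expect it to follow directly from the description of the semistable loci established in Sections~2--3.

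For part (1), I would identify $V_1$ with the image in $X(\alpha_1)$ of the $\alpha_1$-semistable points of the coordinate subspace $Z_1\subset\mathbb{C}^N$ cut out by the vanishing of all coordinates indexed by $J_1^+\cup J_1^-$. Indeed, since $G_1\subset G$ acts on a representative $z$ coordinatewise through its weights, the fixed-point condition $z\zeta=z$ for all $\zeta\in G_1$ is exactly the vanishing of the nonzero-weight coordinates, so $V_1=Z_1/\!/_{\alpha_1}G$. Because $G_1$ acts trivially on $Z_1$, this quotient is really $Z_1/\!/_{\alpha_1}(G/G_1)$; as $Z_1$ is an affine space on which $(\mathbb{C}^*)^{N-\#J_1^+-\#J_1^-}$ acts with a dense orbit and $G/G_1$ is a torus, the GIT quotient is again a toric variety, with residual torus a quotient of $T$. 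This is the mechanism by which the fixed locus of a subtorus of a toric variety is itself toric.

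For part (2) I would restrict to the $+$ side (the $-$ side being symmetric under exchanging $J_1^+$ and $J_1^-$). Over a point of $V_1$ the fibre of $\pi^+$ consists of the $\alpha^+$-semistable points whose $J_1^-$ coordinates vanish and whose zero-weight coordinates are pinned by the base point, leaving only the $J_1^+$ coordinates free, not all zero, modulo $G_1\cong\mathbb{C}^*$ acting with equal weight $+1$; this is precisely $\mathbb{C}P^{\#J_1^+-1}$. I would then upgrade this fibrewise statement to a genuine bundle by writing $V^+=\{z:z_{J_1^-}=0\}/\!/_{\alpha^+}G$ as a GIT quotient and using the residual-torus action to trivialize the projection over the affine toric charts of $V_1$, thereby realizing $V^+$ as a fibred toric variety in the sense of Definition~\ref{def:eletor}.

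For part (3), on $X(\alpha^\pm)\setminus V^\pm$ at least one coordinate in each of $J_1^+$ and $J_1^-$ is nonzero, so $G_1$ acts with trivial stabilizer there and each such orbit is simultaneously $\alpha^+$-, $\alpha_1$- and $\alpha^-$-semistable with closed $G$-orbit; hence the three quotients parametrize the same orbits away from the exceptional loci, and $\pi^\pm$ are proper bijective morphisms onto $X(\alpha_1)\setminus V_1$ with a common explicit inverse, hence biholomorphisms. The main obstacle I anticipate is not this set-theoretic bijection but the local triviality in part (2): one must check that the projective-space fibration is Zariski-locally trivial and $T$-equivariant, rather than merely a map with projective-space fibres, and that the $G_1$-weights on $J_1^+$ are genuinely all equal so that the fibre is an honest $\mathbb{C}P^{\#J_1^+-1}$ and not a weighted projective space. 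This last point relies essentially on $\alpha_1$ lying in generic position on the wall, so that no further coordinate directions degenerate as the wall is crossed.
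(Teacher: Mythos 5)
Your proposal is correct and follows essentially the same route as the paper's proof: identify the semistable loci via the numerical (moment-map) stability criterion, realize $V_1$ as the quotient of the coordinate subspace $\{z_i=0,\ i\in J_1\}$ by $M_1=M/G_1$, compute the fiber as $(\mathbb{C}^{\#J_1^\pm}\backslash\{0\})/G_{1\mathbb{C}}\cong\mathbb{C}P^{\#J_1^\pm-1}$, and prove (3) by noting that points with nonvanishing coordinates in both $J_1^+$ and $J_1^-$ have closed orbits in $(\mathbb{C}^d)^{\alpha_1-ss}$, so all three quotients parametrize the same orbits there. The only substantive difference is that you explicitly flag the equal-weight and local-triviality issues that the paper leaves implicit; note, however, that the weights on $J_1^\pm$ being $\pm 1$ follows from the standing regularity (unimodularity) assumption on $\mathcal{A}$, not from $\alpha_1$ being in generic position on the wall, which only guarantees that a single one-dimensional group $G_1$ is involved.
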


In the case $\alpha_1$ lies in a wall which is the boundary of $\mathfrak{m}^*_+$, $X(\alpha^-)$ is empty, thus the natural morphism $\pi^+$ from $X(\alpha^+)$ to $X(\alpha_1)$ degenerates to a bundle projection with fiber a projective space. Hence $X(\alpha^+)$ is a fibred toric variety. And it is showed that all fibred toric variety comes in this way.

Section 4 is parallel to section 2. Most properties of toric variety
have their analogues in toric hyperk\"{a}hler case. Additionally, we
mainly review the result in \cite{Pr08} and \cite{BD00} concerning the extended core and core of toric
hyperk\"{a}hler variety, which establishes the deep connection
between toric variety and toric hyperk\"{a}hler variety.

Our discussion in section 5 on the variation of toric
hyperk\"{a}hler variety is highly influenced by Konno's works. He described toric hyperk\"{a}hler varieties as GIT quotient and studied the natural morphisms and Mukai flops of them, which take similar forms with the toric varieties.

\begin{theorem}\cite{Ko03, Ko08}\label{thm:hyp:nat}

\noindent (1) If we set $V_1= \{[z,w] \in  Y(\alpha_1, \beta) | (z,w)\zeta =
(z,w) \ \text{for} \ \zeta \in G_1\}$, then $V_1$ is a toric
hyperk\"{a}hler variety.

\noindent (2) If we set $V^\pm = (\pi^\pm)^{-1}(V_1)$, then
$\pi|_{V^\pm} : V^\pm \rightarrow V_1$ are fiber bundles whose fiber
is biholomorphic to $\mathbb{C}P^{\# J_1 -1}$. Moreover, the complex
codimension of $V_1$ and $V^\pm$ in $Y(\alpha_1, \beta)$ and
$Y(\alpha^\pm, \beta)$ are $2(\# J_1 -1)$ and $\# J_1 -1$
respectively, where $\#J_1$ is the number of elements in $J_1$.

\noindent (3) The natural morphism
$\pi^\pm|_{Y(\alpha^\pm,\beta)\backslash V^\pm} : Y(\alpha^\pm,
\beta) \backslash V^\pm \rightarrow Y(\alpha_1, \beta) \backslash
V_1$ are biholomorphic maps.

\end{theorem}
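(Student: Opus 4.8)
The plan is to realize $Y(\alpha,\beta)$ through the GIT quotient description set up in this section, in parallel with Section~2, and then to transport the argument proving Theorem~\ref{thm:natmor} into the cotangent setting. Recall that $Y(\alpha,\beta)=\mu_\mathbb{C}^{-1}(\beta)/\!/_\alpha G$, where $G\subset(\mathbb{C}^*)^n$ is the defining subtorus acting on $T^*\mathbb{C}^n=\{(z,w)\}$, the holomorphic moment map $\mu_\mathbb{C}$ is built from the products $z_iw_i$ and projected to $\mathfrak{g}^*$, the complex parameter $\beta$ is fixed, and $\alpha$ determines semistability. A generic point $\alpha_1$ of a wall corresponds to a one-parameter subgroup $G_1\cong\mathbb{C}^*\subset G$ together with the index set $J_1$ of those coordinates on which $G_1$ acts nontrivially; after normalization $\zeta\in G_1$ acts with weight $+1$ on $z_j$ and $-1$ on $w_j$ for $j\in J_1$ and trivially otherwise. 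Since $z_jw_j$ is then $G_1$-invariant, the $G_1$-component of $\mu_\mathbb{C}$ is $\sum_{j\in J_1}z_jw_j$, and for $V_1$ to be nonempty one needs the corresponding component of $\beta$ to vanish --- exactly the condition making the wall crossing a genuine Mukai flop.

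For part~(1), I identify $V_1$ with the image of the $G_1$-fixed locus. A representative $(z,w)$ is fixed by $G_1$ precisely when $z_j=w_j=0$ for every $j\in J_1$; on this locus $G_1$ acts trivially on the surviving coordinates, so the residual action is by $G/G_1$ on $T^*\mathbb{C}^{\,n-\#J_1}$. The key step is a reduction-in-stages check: the restricted holomorphic moment map and the inherited linearization descend to exhibit $V_1$ as the hyperk\"ahler quotient of $T^*\mathbb{C}^{\,n-\#J_1}$ by $G/G_1$, hence as a toric hyperk\"ahler variety. Writing $2s=\dim_\mathbb{C}Y(\alpha_1,\beta)$ with $s=n-\dim_\mathbb{C}G$, the count gives $\dim_\mathbb{C}V_1=2(s-\#J_1+1)$, i.e.\ complex codimension $2(\#J_1-1)$; this is the hyperk\"ahler counterpart of Theorem~\ref{thm:natmor}(1), and I reduce to the combinatorics of that case for the $\alpha$-stability data.

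For part~(2), the computation localizes over $V_1$ to the standard local model of the Mukai flop. The $J_1$-coordinates $(z_j,w_j)_{j\in J_1}$, subject to the single equation $\sum_{j\in J_1}z_jw_j=0$ and to $\alpha^\pm$-semistability, modulo $G_1\cong\mathbb{C}^*$, form a copy of $T^*\mathbb{C}P^{\#J_1-1}$ on each side, with $V^\pm$ the zero section ($w=0$, resp.\ $z=0$). Thus $\pi^\pm|_{V^\pm}\colon V^\pm\to V_1$ is the associated $\mathbb{C}P^{\#J_1-1}$-bundle; being the zero section, cut out by the vanishing of the $\#J_1-1$ cotangent fiber directions, $V^\pm$ has complex codimension $\#J_1-1$. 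The decisive feature here --- what separates this case from the flip in Theorem~\ref{thm:natmor}, where the two fibers $\mathbb{C}P^{\#J_1^+-1}$ and $\mathbb{C}P^{\#J_1^--1}$ may differ --- is the symmetry between $z$ and $w$ imposed by the cotangent structure and the single quadratic constraint, which forces both fibers to equal $\mathbb{C}P^{\#J_1-1}$.

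Part~(3) then follows as in the toric case: a point of $Y(\alpha_1,\beta)$ changes its $\alpha^\pm$-semistability status only if it involves the $J_1$-coordinates, that is, only over $V_1$, so on the complement the $\alpha^\pm$- and $\alpha_1$-semistable loci coincide and $\pi^\pm$ is induced by the identity on representatives, hence a biholomorphism. The step I expect to be the main obstacle is part~(1): showing rigorously that the $G_1$-fixed locus with its restricted moment map and inherited stability is \emph{genuinely} the hyperk\"ahler quotient by $G/G_1$ (true reduction in stages, not merely a set-theoretic match), and then globalizing the local Mukai-flop model uniformly over $V_1$ so that part~(2) yields honest fiber bundles rather than a fiberwise statement.
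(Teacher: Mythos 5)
Your proposal follows essentially the same route as the paper's sketch (and Konno's original argument): part (1) by reduction in stages, identifying $V_1$ with the hyperk\"ahler quotient of $\mathbb{H}^{\{1,\dots,d\}\backslash J_1}$ by $M_1=M/G_1$; part (2) by characterizing $V^\pm$ through vanishing conditions on the $J_1$-coordinates, with fiber $(\mathbb{C}^{\#J_1}\setminus\{0\})/G_{1\mathbb{C}}\cong\mathbb{C}P^{\#J_1-1}$; and part (3) by the coincidence of the $\alpha^\pm$- and $\alpha_1$-semistable loci away from the exceptional sets. Your ``normalization'' making $G_1$ act with weight $+1$ on every $z_j$, $j\in J_1$, is just the paper's explicit $J_1^+/J_1^-$ bookkeeping repackaged via the swap $(z_j,w_j)\mapsto(w_j,-z_j)$ on $J_1^-$, which is legitimate exactly because of the orientation-independence of toric hyperk\"ahler varieties that the paper invokes, and which is why all of $J_1$ (not just $J_1^+$) contributes to the fiber here.
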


Take $\beta=0$, consider the natural morphism between $4n$ dimensional toric hyper{\"a}hler varieties $\pi^\pm: Y(\alpha^\pm,0) \rightarrow Y(\alpha_1,0)$, it can be encoded as the variation of a smooth hyperplane arrangement $\mathcal{A}$ to a non-simplicial one $\mathcal{A}_1$ with $\mathcal{S}$ a lower dimensional arrangement as singular set. We have

\begin{theorem}\label{thm:restri}

Let $Z_1$ be the extended core of $V_1$ which is the toric varieties $X(\mathcal{S}_{\tilde{\epsilon}})$ intersecting together,
Restrict $V^\pm$ these $\mathbb{C}P^r$ fiber bundles to $Z_1$, then $V^\pm|_{X(\mathcal{S}_{\tilde{\epsilon}})}$ the $\mathbb{C}P^r$ fiber bundles over each $X(\mathcal{S}_{\tilde{\epsilon}})$ are all fibred toric varieties of complex dimension $n$.

\end{theorem}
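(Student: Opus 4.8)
The plan is to read off the fibred-toric structure from two inputs already at hand: the bundle description of $V^\pm$ provided by Theorem~\ref{thm:hyp:nat}, and the description of the extended core of a toric hyperk\"{a}hler variety as a union of toric varieties reviewed in Section~4. Writing $r = \#J_1 - 1$, Theorem~\ref{thm:hyp:nat}(2) tells us that $\pi^\pm|_{V^\pm}\colon V^\pm \to V_1$ is a $\mathbb{C}P^r$ bundle over the toric hyperk\"{a}hler variety $V_1$, and that this whole picture is equivariant for the residual torus inherited from $Y(\alpha^\pm,0)$. By the results of \cite{BD00} and \cite{Pr08} recalled in Section~4, the extended core $Z_1$ of $V_1$ is the union $\bigcup_{\tilde\epsilon} X(\mathcal{S}_{\tilde\epsilon})$ of the toric varieties attached to the chambers of the lower-dimensional arrangement $\mathcal{S}$, and each $X(\mathcal{S}_{\tilde\epsilon})$ is a torus-invariant toric subvariety of $V_1$. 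Since restricting a fiber bundle to a subvariety of its base again yields a fiber bundle, $V^\pm|_{X(\mathcal{S}_{\tilde\epsilon})}\to X(\mathcal{S}_{\tilde\epsilon})$ is automatically a $\mathbb{C}P^r$ bundle; the real content is to certify it as a fibred toric variety in the sense of Definition~\ref{def:eletor}.

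First I would verify the requirements of Definition~\ref{def:eletor} in turn. The base $X(\mathcal{S}_{\tilde\epsilon})$ is a toric variety by construction, with torus $T^m_\mathbb{C}$, $m = \dim_\mathbb{C} X(\mathcal{S}_{\tilde\epsilon})$. For equivariance, I would use that $\pi^\pm|_{V^\pm}$ is already equivariant for the big torus of $Y$, so restricting the base to the invariant subvariety $X(\mathcal{S}_{\tilde\epsilon})$ keeps the projection equivariant for $T^m_\mathbb{C}$, and the base torus lifts into the total space as a subtorus of $T^{r+m}_\mathbb{C}$, exactly as the commuting square of Definition~\ref{def:eletor} demands. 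That the total space $V^\pm|_{X(\mathcal{S}_{\tilde\epsilon})}$ is itself a toric variety is the substantive point: I would either exhibit a dense orbit directly, combining the dense $T^m_\mathbb{C}$ orbit of the base with the fiberwise $(\mathbb{C}^*)^r \subset T^{r+m}_\mathbb{C}$ action on $\mathbb{C}P^r$ so that the total space carries an effective $T^{r+m}_\mathbb{C}$ action with an open dense orbit, or, more cleanly, argue that restricting $\pi^\pm$ over $X(\mathcal{S}_{\tilde\epsilon})$ reproduces the degenerate boundary-wall situation of Section~3 described after Theorem~\ref{thm:natmor}: the hyperk\"{a}hler wall-crossing, cut down to an extended-core component, becomes a toric natural morphism across a boundary wall of $\mathfrak{m}^*_+$, whose source is by that discussion a fibred toric variety.

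The dimension claim then follows from bookkeeping of codimensions. By Theorem~\ref{thm:hyp:nat}(2) the complex codimension of $V_1$ in the $2n$-complex-dimensional variety $Y(\alpha_1,0)$ is $2(\#J_1-1)=2r$, whence $\dim_\mathbb{C} V_1 = 2n-2r = 2(n-r)$. Being itself a toric hyperk\"{a}hler variety, $V_1$ has extended-core components that are toric varieties of half its complex dimension, i.e. $m = n-r$. Therefore
\[
\dim_\mathbb{C}\bigl(V^\pm|_{X(\mathcal{S}_{\tilde\epsilon})}\bigr) = r + m = r + (n-r) = n,
\]
as asserted, and consistently $T^{r+m}_\mathbb{C} = T^n_\mathbb{C}$.

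I expect the main obstacle to be the toric structure on the total space together with the matching of combinatorial data, rather than the equivariance or the dimension count. Concretely, I must identify the fan (equivalently, the arrangement) of $V^\pm|_{X(\mathcal{S}_{\tilde\epsilon})}$ and confirm it is the one produced by the boundary-wall toric morphism of Section~3: this amounts to showing that the index set $J_1$ governing the hyperk\"{a}hler fiber coincides with the index set that collapses in the toric arrangement $\mathcal{S}_{\tilde\epsilon}$, and that the sign data $\tilde\epsilon$ selecting an extended-core component is compatible with the chamber structure on $\mathfrak{m}^*_+$. Verifying this compatibility --- essentially that the hyperk\"{a}hler singular arrangement $\mathcal{S}$ restricts on each core component to the toric arrangement whose boundary-wall crossing produces the given $\mathbb{C}P^r$ bundle --- is where the genuine work lies; once it is in place, the fibred toric conclusion and the dimension count are immediate.
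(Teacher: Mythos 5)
Your dimension count is correct and matches the paper's (codimension $2(\#J_1-1)$ for $V_1$, half-dimensional core components, so $r+(n-r)=n$), and your instinct that the restriction should reproduce the boundary-wall degeneration of Section~3 is exactly the right picture. But there is a genuine gap: you correctly isolate the substantive point --- that the \emph{total space} $V^\pm|_{X(\mathcal{S}_{\tilde\epsilon})}$ is a toric variety --- and then leave it unproved, explicitly deferring it as ``where the genuine work lies.'' Neither of your two proposed routes is carried out, and route 1 (exhibiting a dense orbit by combining the base torus with the fiberwise $(\mathbb{C}^*)^r$) does not by itself identify the total space with any concrete toric variety from which the fibred-polytope structure could be read off.

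The step you are missing is the paper's first and decisive one: show that the restricted bundle lies inside the extended core $Z^+$ of the ambient space $Y(\alpha^\pm,0)$. This is a one-line computation once you combine two coordinate descriptions already available: by Konno's proof of Theorem~\ref{thm:hyp:nat}(2), a point $[z,w]\in V^+$ satisfies $w_i=0$ for $i\in J_1^+$ and $z_i=0$ for $i\in J_1^-$; and a point lying over the extended core $Z_1$ of $V_1$ satisfies $z_iw_i=0$ for all $i\notin J_1$. Together these give $z_iw_i=0$ for \emph{all} $i$, i.e.\ the restricted bundle sits inside $Z^+=\bigcup_\epsilon X(\mathcal{A}_\epsilon)$, a union of $n$-dimensional toric varieties of $Y(\alpha^\pm,0)$. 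This containment, plus your (correct) count that the total space has dimension $n$, identifies the restricted bundle with components of $Z^+$; the paper then concludes via the Section~3 characterization, since these are precisely the toric varieties whose polytopes collapse during the variation from $\mathcal{A}$ to $\mathcal{A}_1$, hence are the fibred toric varieties associated to the fibred polytopes. Without the containment in $Z^+$, your ``matching of combinatorial data'' has no mechanism to get started, so the proposal as written does not close.
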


\noindent \textbf{Acknowledgement:} Both authors want to thank Prof.
Bin Xu, Prof. Bailin Song, Dr. Yalong Shi and Dr. Yihuang Shen for valuable
conversations and the second author want to thank his supervisor
Prof. Yuxin Dong for constant encouragement.

\section{Toric variety}
Various descriptions of toric variety have their own advantages. We
first consider the symplectic quotient, then shift to GIT quotient
for the study of variation in next section.
\subsection{Symplectic quotient(K{\"a}hler quotient)}
The real torus $T^d=\{(\zeta_1, \zeta_2, \cdots, \zeta_d) \in
\mathbb{C}^d, |\zeta_i|=1\}$ acts on $\mathbb{C}^d$ freely. Denote
$M$ the $m$-dimensional connected subtorus of $T^d$ whose Lie
algebra $\mathfrak{m} \subset \mathfrak{t}^d$ is generated by
integer vectors(which is always taken to be primitive), then we have
the following exact sequences
\begin{equation*}
0\rightarrow \mathfrak{m}\xrightarrow{\iota}
\mathfrak{t}^d\xrightarrow{\pi} \mathfrak{n}\rightarrow 0,
\end{equation*}
\begin{equation*}\label{eq:exact:dual}
0\leftarrow {\mathfrak{m}}^*\xleftarrow{\iota^*}
(\mathfrak{t}^d)^*\xleftarrow{\pi^*} {\mathfrak{n}}^*\leftarrow 0,
\end{equation*}
where $\mathfrak{n}=\mathfrak{t}^d/\mathfrak{m}$ is the Lie algebra
of the $n$-dimensional quotient torus $N=T^d/M$ and $m+n=d$. For
simplicity, we omit the superscript $d$ over $\mathfrak{t}$ from now
on.

Let $\{e_i\}_{i=1}^d$ be the standard basis of $\mathfrak{t}$, then
$\pi(e_i)=u_i$ are also primitive. Denote $\{e^*_i\}_{i=1}^d$ the
dual basis of $\mathfrak{t}^*$ and $\{\theta_i\}_{i=1}^m$ some basis
span $\mathfrak{m}$. The action of $M$ on $\mathbb{C}^d$ admits a
moment map
\begin{equation*}
\mu(z)=\frac{1}{2}\sum_{i=1}^d|z_i|^2\iota^* e^*_i.
\end{equation*}
For any $\alpha \in \mathfrak{m}^*$, the symplectic quotient
$\mu^{-1}(\alpha)/M$ is a toric variety, denoted as $X(\alpha)$,
inheriting K{\"a}hler metric from $\mathbb{C}^d$ on
it's smooth part(cf. \cite{Gu94a}).

The quotient torus $N$ has a residue circle action on $X(\alpha)$
and gives rise to a moment map to $\mathfrak{n}^*$,
\begin{equation*}
\bar{\mu}([z])=\frac{1}{2}\sum_{i=1}^d|z_i|^2 e^*_i.
\end{equation*}
The image of this map is a convex polytope $\Delta$ called the
Delzant polytope of $X(\alpha)$(cf. \cite{De88}). Conversely, any smooth compact toric variety $X$ of complex
dimension $n$, with a K{\"a}hler metric invariant under some torus $N$
comes from Delzant's construction. Unfortunately, this polytope does
not recover all the data of the quotient construction, and the worse
is that it does not cooperate well with the toric hyperk{\"a}hler
theory. We use the notion of hyperplane arrangement with
orientation(cf. \cite{Pr04}) to replace polytope. In detail,
consider a set of rational oriented hyperplanes
$\mathcal{A}=\{(H_i,u_i)\}_{i=1}^d$,
\begin{equation*}
H_i=\{x \in \mathfrak{n}^*|\langle u_i,x\rangle+\lambda_i=0\},
\end{equation*}
where $H_i$ is the hyperplane and $u_i$ is fixed primitive vector in
$\mathfrak{n}_\mathbb{Z}$ specifying the orientation, called the
normal of $H_i$. We define several subspaces related to these
oriented hyperplanes,
\begin{equation*}
H^{\geq 0}_i=\{x \in \mathfrak{n}^*|\langle u_i,x \rangle+\lambda_i
\geq 0\},
\end{equation*}
\begin{equation*}
H^{\leq 0}_i=\{x \in \mathfrak{n}^*|\langle u_i,x \rangle+\lambda_i
\leq 0\}.
\end{equation*}
A ploytope is naturally associated to this arrangement,
\begin{equation*}
\Delta=\bigcap^d_{i=1} H^{\geq 0}_i,
\end{equation*}
which could be empty or unbounded.

The arrangement $\mathcal{A}$ will decide a toric variety the
same as $\Delta$ does. Since $u_i$ define a map $\pi: \mathfrak{t}
\rightarrow \mathfrak{n}$, where $\mathrm{Ker} \pi=\mathfrak{m}$,
let $M$ be the Lie group corresponding to $\mathfrak{m}$ and set
$\alpha=\sum \lambda_i \iota^* e_i^*$, then we call
$\mu^{-1}(\alpha)/M$ the toric variety corresponding to
$\mathcal{A}$ and $\lambda=(\lambda_1, \cdots, \lambda_d)$ a lift of
$\alpha$. For fixed normal vectors, the hyperplane arrangements
corresponding to two different lifts of same moment map value
$\alpha$ only differ by a parallel transport, thus produce same
toric variety(cf. \cite{Pr04}). So we can abuse the notations
$X(\alpha)$ and $X(\mathcal{A})$. Moreover, denote $\Theta$ the set of maps
form $\{1, \dots, d\}$ to $\{-1,1\}$. For $\epsilon \in \Theta$, let
$\mathcal{A}_\epsilon$ be the arrangement changing the normal of
$H_i$ if $\epsilon(i)=-1$, and when $\epsilon(i)=1$ for all $i$, we
abbreviate the subscript $\epsilon$ for simplicity. Notice that the
toric variety $X(\mathcal{A}_\epsilon)$ for various $\epsilon$ could
be totally different.

\begin{example}[see \cite{BD00}]\label{ex:1dim:arr}
We take $u_1=-f_1$, $u_2=u_3=f_1$ in $\mathfrak{n}^1$ where $f_1$ is
the standard basis, and $\lambda_1=1$, $\lambda_2=-\frac{1}{2}$,
$\lambda_3=0$. Hence $\mathfrak{m}$ is spanned by $\iota
\theta_1=e_1+e_2$, $\iota \theta_2=e_1+e_3$, for short, $(1,1,0)$
and $(1,0,1)$. The toric variety is $\mathbb{C}P^1$. See Figure
\ref{fig:1dim}.
\end{example}

\begin{figure}[h]
\centering \subfigure[the fan, where $u_2$, $u_3$ superposition]{
\label{fig:1dim:fan} %% label for first subfigure
\includegraphics[width=2.2in]{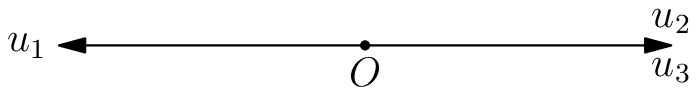}}
\hspace{0.3in} \subfigure[the hyperplane arrangement and polytope
which is represented by the bold line]{
\label{fig:1dim:arr} %% label for second subfigure
\includegraphics[width=2.2in]{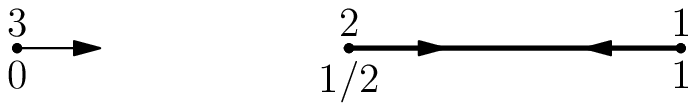}}
\caption{the fan and hyperplane arrangement in Example
\ref{ex:1dim:arr}}
\label{fig:1dim} %% label for entire figure
\end{figure}

\begin{example}[see \cite{BD00} or \cite{Pr04}]\label{ex:2dim:arr}
Let $n=2$, $u_1=f_1$, $u_2=f_2$, $u_3=-f_1-f_2$, $u_4=-f_2$, and
$\lambda_1=\lambda_2=\lambda_3=\lambda_4=1$. The toric variety is
Hirzebruch surface $\mathbb{P}(\mathcal{O}\oplus\mathcal{O})$. See
Figure \ref{fig:2dim}.
\end{example}

\begin{figure}[h]
\centering \subfigure[the fan]{
\label{fig:2dim:fan} %% label for first subfigure
\includegraphics[width=2.2in]{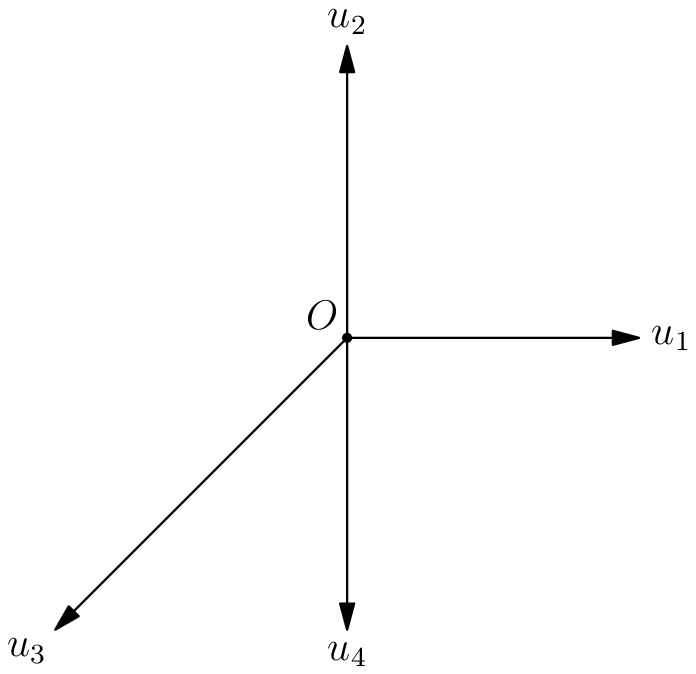}}
\hspace{0.3in} \subfigure[the hyperplane arrangement and polytope
which is represented by the shade area]{
\label{fig:2dim:arr} %% label for second subfigure
\includegraphics[width=2.2in]{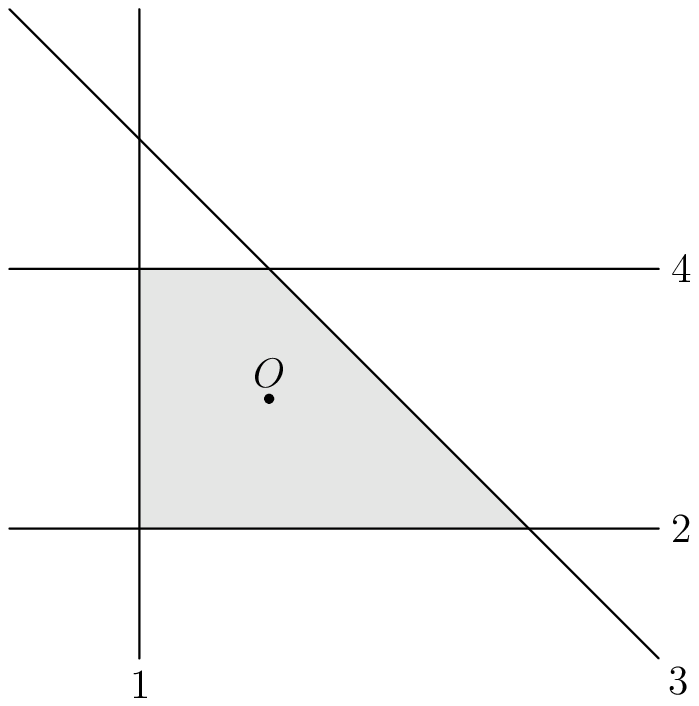}}
\caption{the fan and hyperplane arrangement in Example
\ref{ex:2dim:arr}}
\label{fig:2dim} %% label for entire figure
\end{figure}

\subsection{Definition of fibred toric variety}

Now we state the definition of fibred toric variety.

\begin{definition*}
We call a $(r+s)$-dimensional toric variety $X$ fibred toric variety, if it is a $\mathbb{C}P^r$ bundle over
$s$-dimensional toric variety $X_1$, and the bundle projection map is $T^s_\mathbb{C}$ equivariant, i.e. the $T^s_\mathbb{C}$ torus action of $X_1$ lifted to $X$ as a subgroup of $T^{r+s}_\mathbb{C}$, making the following diagram commutes.

\[
\begin{CD}
X @>{\rm T^{s}_\mathbb{C} \subset T^{r+s}_\mathbb{C}}>> X \\
@VVV @VVV\\
X_1 @>{\rm T^{s}_\mathbb{C}}>> X_1\\
\end{CD}
\]

\end{definition*}

The first nontrivial fibred toric variety is the Hirzebruch surface in Example \ref{ex:2dim:arr}, a $\mathbb{C}P^1$ bundle over $\mathbb{C}P^1$. And we will see this kind of toric varieties is very important in the variation of toric varieties and toric hyperk{\"a}hler varieties.

\subsection{Regularity} If the rational vectors $u_i$
is regular, i.e. every collection of $n$ linearly independent
vectors $\{u_{i_1},\cdots, u_{i_n}\}$ span $\mathfrak{n}_\mathbb{Z}$
as a $\mathbb{Z}$-basis, then $\mathcal{A}$ is called regular. The
arrangement $\mathcal{A}$ is called simplicial if every subset of
$k$ hyperplanes with nonempty intersection intersects in codimension
$k$. For a non-simplicial arrangement, all the points in the
intersection of $k$ hyperplanes whose codimension is lower than $k$ are called singular set.
$\mathcal{A}$ is smooth if it is both regular and simplicial.

\begin{theorem}[\cite{BD00},\cite{Pr04}]
$X(\mathcal{A})$ is an orbifold, if and only if $\mathcal{A}$ is
simplicial, and $X(\mathcal{A})$ is smooth if and only if
$\mathcal{A}$ is smooth. When $\mathcal{A}$ is regular but non
simplicial, it may attain Ablean quotient singularity.
\end{theorem}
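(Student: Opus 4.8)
The plan is to reduce the statement to a local analysis of the $M$-action on the level set $\mu^{-1}(\alpha)$ and then translate the resulting combinatorics into the language of the arrangement. First I would set up the bridge between the analytic datum $z$ and the geometric point $x=\bar\mu([z])\in\mathfrak{n}^*$. For $z\in\mu^{-1}(\alpha)$ the defining relation $\frac12\sum|z_i|^2\iota^*e_i^*=\sum\lambda_i\iota^*e_i^*$ forces $\sum(\frac12|z_i|^2-\lambda_i)e_i^*\in\ker\iota^*=\mathrm{im}\,\pi^*$, so there is a unique $x\in\mathfrak{n}^*$ with $\langle u_i,x\rangle+\lambda_i=\frac12|z_i|^2$ for every $i$. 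Hence $x\in\Delta$, and the zero set $I_z=\{i:z_i=0\}$ coincides with the incidence set $I_x:=\{i:x\in H_i\}$. This matches the stratification of $\mu^{-1}(\alpha)$ by isotropy type with the stratification of $\Delta$ by its faces.

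Next I would compute stabilizers. The $T^d$-stabilizer of $z$ is the coordinate subtorus $T^{I_z}$, so the $M$-stabilizer is $M\cap T^{I_z}$, whose Lie algebra is $\mathfrak{m}\cap\mathfrak{t}^{I_z}$ with $\mathfrak{t}^{I}=\mathrm{span}\{e_i:i\in I\}$. Since $\pi(e_i)=u_i$ and $\ker\pi=\mathfrak{m}$, one has $\mathfrak{m}\cap\mathfrak{t}^{I}=0$ exactly when $\{u_i:i\in I\}$ is linearly independent. The same linear algebra shows $d\mu_z$ is surjective iff $\{u_i:i\in I_z\}$ is independent, so $\alpha$ is a regular value of $\mu$ and $M$ acts locally freely at precisely the same points. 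Because $\mathrm{codim}\bigcap_{i\in I}H_i=\mathrm{rank}\{u_i:i\in I\}$, independence of the normals through every realized face is exactly the simpliciality of $\mathcal A$. This yields the first assertion: $\mu^{-1}(\alpha)$ is a manifold on which $M$ acts locally freely, hence $X(\mathcal A)$ is an orbifold, iff $\mathcal A$ is simplicial.

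For the smoothness statement I would refine the finite stabilizer $M\cap T^{I_x}$ (in the simplicial case) to the lattice level: it is trivial iff $\{u_i:i\in I_x\}$ extends to a $\mathbb{Z}$-basis of $\mathfrak{n}_{\mathbb{Z}}$, which I would verify from the exact sequence $0\to\mathfrak{n}^*_{\mathbb{Z}}\xrightarrow{\pi^*}\mathbb{Z}^d\xrightarrow{\iota^*}\mathfrak{m}^*_{\mathbb{Z}}\to0$ by checking that $\{e_i^*:i\notin I_x\}$ surjects onto $\mathfrak{m}^*_{\mathbb{Z}}$. Under the regularity hypothesis every independent set of normals is part of a $\mathbb{Z}$-basis, so in the simplicial case all stabilizers are trivial and the free quotient $X(\mathcal A)$ is a manifold; conversely a non-regular independent incidence set produces a nontrivial finite stabilizer and a genuine orbifold point, while a non-simplicial face produces a positive-dimensional stabilizer. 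Thus $X(\mathcal A)$ is smooth iff $\mathcal A$ is smooth. Finally, when $\mathcal A$ is regular but non-simplicial, a realized face with dependent normals gives $\mathfrak{m}\cap\mathfrak{t}^{I_x}\neq0$; applying the symplectic slice (Marle--Guillemin--Sternberg) normal form at such an orbit identifies a neighbourhood in $X(\mathcal A)$ with a toric affine chart modelled on the cone generated by $\{u_i:i\in I_x\}$, i.e.\ an abelian quotient type singularity.

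I expect the main obstacle to be the local normal form used in the last two paragraphs: making precise that the reduced space near a singular orbit is biholomorphic to the advertised $\mathbb{C}^n/\Gamma$ (respectively affine toric) model requires the equivariant Darboux/slice theorem together with reduction in stages, and one must check that the complex, not merely the symplectic, structure matches the toric chart. A secondary technical point is the bookkeeping that the nonempty intersections appearing in the definition of simpliciality are exactly those realized as incidence sets $I_x$ of points of $\Delta$, so that no intersection disjoint from the polytope interferes with the isotropy analysis.
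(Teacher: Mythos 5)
The paper itself offers no proof of this theorem --- it is quoted from \cite{BD00} and \cite{Pr04} --- so your proposal can only be measured against the standard arguments in those sources, and your forward implications do reproduce them correctly: the identification of the zero set $I_z$ with the incidence set $I_x$ via the dual exact sequence, the computation of the stabilizer $M\cap T^{I_z}$ with Lie algebra $\mathfrak{m}\cap\mathfrak{t}^{I_z}$, the observation that surjectivity of $d\mu_z$ and local freeness of the $M$-action both reduce to linear independence of $\{u_i: i\in I_z\}$, and the lattice refinement showing the finite stabilizers are trivial when $\mathcal{A}$ is regular. One point you should make explicit in the smoothness step: to conclude that a nontrivial finite stabilizer gives a \emph{genuine} orbifold point you must rule out complex reflections in the local group $\Gamma$, and this uses the standing assumption that the $u_i$ are primitive (a rational non-integral multiple of a primitive vector is never integral); it is true, but not automatic.

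The genuine gap is in your ``only if'' directions. From a non-simplicial incidence set you correctly obtain $\mathfrak{m}\cap\mathfrak{t}^{I_x}\neq 0$, a positive-dimensional stabilizer, but your conclusion that $X(\mathcal{A})$ then fails to be an orbifold (or smooth) does not follow: at such points $\alpha$ is a \emph{singular value} of $\mu$, so $\mu^{-1}(\alpha)$ is itself singular there and the Marle--Guillemin--Sternberg normal form you invoke is not available; moreover the quotient can collapse the offending orbits into a perfectly smooth variety. The paper records exactly this phenomenon in the remark after Proposition \ref{pro:arrcut}: the lift $\lambda=(1,0,0)$ in Example \ref{ex:1dim:arr} gives a non-simplicial arrangement (with $H_2=H_3$ coincident) whose quotient is $\mathbb{C}P^1$, and $\lambda=(1,1,1,2)$ in Example \ref{ex:2dim:arr} is non-simplicial yet yields $\mathbb{C}P^2$ --- both smooth. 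This is precisely why the theorem's last clause is phrased ``may attain'' an abelian quotient singularity rather than ``attains.'' So your argument establishes the ``if'' halves, but the converses as you argue them are false without excluding such degeneracies (coincident hyperplanes, incidence sets not realized by faces of $\Delta$ --- the bookkeeping issue you flagged yourself is not merely technical, it can genuinely fail), or without replacing the slice argument by a direct local analysis of the GIT quotient at the singular level set.
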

From now on, $\mathcal{A}$ is always assumed regular to exclude the
general orbifold singularities. The smoothness of hyperplane
arrangement is closed related to the the moment map's regularity.
Let $\mathfrak{m}^*_{+}$ be the positive cone spanned by
$\mathbb{R}_{\geq 0} \iota^* e_i^*$. Denote the isotropy subgroup of
$M$ at $z \in \mathbb{C}^d$ by $M_z$, set $\Lambda = \{M_z| z \in
\mathbb{C}^d\}$ and
\begin{equation*}
\Lambda(k) = \{G\in \Lambda | \mathrm{dim} G = k\} \ \text{for}
\quad k = 0,\cdots, m.
\end{equation*}
Focusing on the set of one dimensional isotropy groups $\Lambda(1) =
\{G_s\}_{s=1}^l$, we call the subspace in $\mathfrak{m}^*$ of
codimension one
\begin{equation*}
W_s=\{v^* \in \mathfrak{m}_+^*|\langle v^*, \mathrm{Lie}
G_s\rangle=0\} \subset \mathfrak{m}_+^*
\end{equation*}
a wall. Notice that the wall $W_s$ is spanned by $\{\mathbb{R}^+
\iota^* e_i^* |\langle\iota^* e_i^*, \mathrm{Lie} G_s \rangle= 0\}$.

\begin{proposition}
$$\mathfrak{m}^*_{+reg}=\mathfrak{m}^*_{+} \backslash \bigcup^l_{s=1} W_s$$
\end{proposition}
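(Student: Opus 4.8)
The plan is to characterize $\mathfrak{m}^*_{+reg}$ as the locus of those $\alpha \in \mathfrak{m}^*_+$ over which the $M$-action on the level set $\mu^{-1}(\alpha)$ is locally free, and then to convert the failure of local freeness into membership in one of the walls. First I would invoke the standard moment-map fact that $\alpha$ is a regular value of $\mu$ if and only if $d\mu_z$ is surjective for every $z \in \mu^{-1}(\alpha)$, together with the identification of $\mathrm{coker}\,d\mu_z$ with the dual of the isotropy algebra $\mathrm{Lie}\,M_z$. Consequently $\alpha \in \mathfrak{m}^*_{+reg}$ precisely when $\dim M_z = 0$ for all $z \in \mu^{-1}(\alpha)$.

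Next I would make the isotropy algebra explicit. Setting $J_z = \{i : z_i \neq 0\}$, the coordinatewise $T^d$-action restricted through $\iota \colon \mathfrak{m} \hookrightarrow \mathfrak{t}$ gives
\[
\mathrm{Lie}\,M_z = \mathfrak{m}^{J_z}, \qquad \text{where} \quad \mathfrak{m}^{J} := \{\xi \in \mathfrak{m} : \langle \iota^* e_i^*, \xi\rangle = 0 \ \text{for all}\ i \in J\},
\]
while $\mu(z) = \tfrac12\sum_{i \in J_z}|z_i|^2\,\iota^* e_i^*$ is a strictly positive combination of $\{\iota^* e_i^* : i \in J_z\}$. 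Conversely, any support set $J$ of a nonnegative expansion of $\alpha$ occurs as $J_z$ for a suitable $z \in \mu^{-1}(\alpha)$. Hence $\alpha$ is singular if and only if some such support set $J$ satisfies $\mathfrak{m}^{J} \neq 0$, and the identity $\langle\alpha,\xi\rangle = \tfrac12\sum_{i\in J_z}|z_i|^2\langle\iota^* e_i^*,\xi\rangle$ shows that $\alpha$ annihilates $\mathfrak{m}^{J_z}$.

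To show that $\bigcup_{s} W_s$ consists of singular values, I would use the description of the wall recorded above, namely that $W_s$ is spanned by those $\iota^* e_i^*$ with $\langle \iota^* e_i^*, \mathrm{Lie}\,G_s\rangle = 0$. Given $\alpha \in W_s$, I can therefore expand $\alpha$ with support inside $I_s := \{i : \langle \iota^* e_i^*, \mathrm{Lie}\,G_s\rangle = 0\}$ and pick $z \in \mu^{-1}(\alpha)$ with $J_z \subseteq I_s$. A generator $\xi_s$ of $\mathrm{Lie}\,G_s$ then annihilates every $\iota^* e_i^*$ with $i \in J_z$, so $\xi_s \in \mathfrak{m}^{J_z} = \mathrm{Lie}\,M_z$; thus $\dim M_z \geq 1$ and $\alpha$ is singular. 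This yields $\bigcup_s W_s \subseteq \mathfrak{m}^*_+ \setminus \mathfrak{m}^*_{+reg}$.

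The reverse inclusion carries the main difficulty. Given a singular $\alpha$, choose $z \in \mu^{-1}(\alpha)$ with $\mathfrak{m}^{J_z} \neq 0$; by the identity above, $\alpha$ annihilates $\mathfrak{m}^{J_z}$. The obstruction is that $M_z$ may have dimension greater than one, whereas the walls are indexed only by the one-dimensional isotropy groups $\Lambda(1)$. The key lemma I would establish is that $J_z$ can be enlarged to some $J'$ with $\dim \mathfrak{m}^{J'} = 1$: adjoining an index lowers $\dim \mathfrak{m}^{J}$ by at most one, and whenever $\mathfrak{m}^{J} \neq 0$ there is an index $i$ with $\iota^* e_i^*|_{\mathfrak{m}^{J}} \neq 0$, for otherwise $\iota$ would vanish on the nonzero space $\mathfrak{m}^{J}$, contradicting its injectivity; such an $i$ is automatically outside $J$, so adjoining it strictly drops the dimension and one descends to dimension exactly one. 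The corresponding $G_s = M^{J'} \in \Lambda(1)$ has $\mathrm{Lie}\,G_s = \mathfrak{m}^{J'} \subseteq \mathfrak{m}^{J_z}$, whence $\langle \alpha, \mathrm{Lie}\,G_s\rangle = 0$; combined with $\alpha \in \mathfrak{m}^*_+$ this places $\alpha \in W_s$. I expect this dimension-reduction step to be the delicate point, the remaining arguments being the bookkeeping of supports together with the description of $W_s$ recorded before the proposition.
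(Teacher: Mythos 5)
Your proof is correct, and its core is the same computation the paper makes: the paper differentiates the moment map directly, $(\mathrm{d}\mu)(z)=\sum_{i}(\mathrm{d}f)_{z_i}\otimes\iota^*e_i^*$, concluding that $z$ is critical exactly when $\mathrm{span}\{\iota^*e_i^*\mid z_i\neq 0\}\subsetneqq\mathfrak{m}^*$, which is the dual form of your statement $\mathrm{Lie}\,M_z=\mathfrak{m}^{J_z}\neq 0$ (via $\mathrm{coker}\,\mathrm{d}\mu_z\cong(\mathrm{Lie}\,M_z)^*$). The genuine difference is what comes after: the paper stops there and asserts ``This implies the proposition,'' leaving the translation into the wall decomposition unproved, while you carry it out in both directions. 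Your dimension-reduction lemma --- enlarging $J_z$ one index at a time, each step cutting $\dim\mathfrak{m}^{J}$ by exactly one because some $\iota^*e_i^*$ restricts nontrivially to $\mathfrak{m}^{J}$ (else injectivity of $\iota$ fails), until $\dim\mathfrak{m}^{J'}=1$ --- is precisely the step the paper's one-liner hides: a critical point may have isotropy of dimension greater than one, whereas walls are attached only to the one-dimensional groups in $\Lambda(1)$, so one must manufacture a one-dimensional isotropy group whose Lie algebra $\alpha$ still annihilates; realizing $\mathfrak{m}^{J'}$ as $\mathrm{Lie}\,M_w$ for $w$ supported exactly on $J'$ closes that gap. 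In the other direction, your appeal to the paper's preceding remark (that $W_s$ is spanned by $\{\mathbb{R}_{\geq 0}\,\iota^*e_i^*\mid\langle\iota^*e_i^*,\mathrm{Lie}\,G_s\rangle=0\}$) is both legitimate and unavoidable: an arbitrary nonnegative expansion of $\alpha\in W_s$ need not have support annihilating $\mathrm{Lie}\,G_s$, so the bare definition of $W_s$ would not let you build a critical point in $\mu^{-1}(\alpha)$. In short, same approach and a correct argument, but your write-up supplies the combinatorial bridge from the critical-point computation to the wall description that the paper omits.
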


\begin{proof}
Let $f: \mathbb{C}\rightarrow \mathbb{R}$ be a map defined by
$f(a)=|a|^2$. We can easily observe that $a \in \mathbb{C}$ is a
regular point of $f$ if and only if $a \neq 0$. Since
$(\mathrm{d}\mu)(z) = \sum_{i=1}^d (\mathrm{d}f)_{z_i} \otimes
\iota^* e_i^* \in \mathfrak{m}^*$, $(z) \in \mathbb{C}^d$ is a
critical point of $\mu$ if and only if span $\{\iota^* e_i^*|z_i\neq
0\} \subsetneqq \mathfrak{m}^*$. This implies the proposition.
\end{proof}

There is an one to one correspondence between $\alpha$ and
$\mathcal{A}$, if the fan $\{u_i\}$ is regular, then the smoothness
of $\mathcal{A}$ and the regularity of moment map at $\alpha$
coincide. To see this, for fixed $\alpha \in \mathfrak{m}^*$, $\lambda$ is one of its lift, consider the equation
\begin{equation*}
\alpha=\sum_{i=1}^d x_i \iota^* e_i^*.
\end{equation*}
Denoting the solution space as $\mathfrak{N}_\alpha$, a $n$-plane in $\mathbb{R}^d$, we have

\begin{proposition}\label{pro:arrcut}
Regarding the point $\lambda$ in $\mathfrak{N}_\alpha$ as the origin
, projecting of $\mathfrak{N}_\alpha$ onto some standard
$\mathbb{R}^n$, then we can identify
$\pi_{\mathbb{R}^n}(\mathfrak{N}_\alpha)$ with $\mathfrak{n}^*$, and
the hyperplane arrangement $H_i$ is defined by
$\pi_{\mathbb{R}^n}(\mathfrak{N}_\alpha \bigcap \{x_i=0\})$, where
$\{x_i=0\}$ is the coordinate hyperplane in $\mathbb{R}^d$.
\end{proposition}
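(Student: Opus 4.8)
The plan is to realize the affine slice $\mathfrak{N}_\alpha$ as an affine copy of $\mathfrak{n}^*$ through one explicit affine isomorphism, and then to verify by a one-line coordinate computation that this isomorphism carries each oriented hyperplane $H_i$ onto the coordinate slice $\mathfrak{N}_\alpha \cap \{x_i=0\}$. First I would identify $(\mathfrak{t}^d)^*$ with $\mathbb{R}^d$ via the dual basis $\{e_i^*\}$, so that $(x_1,\dots,x_d)\mapsto \sum_i x_i\,\iota^* e_i^*$ is exactly the surjection $\iota^*$. The equation $\alpha=\sum_i x_i\,\iota^* e_i^*$ then reads $\iota^*(x)=\alpha$, whose solution set is the coset $\mathfrak{N}_\alpha=\lambda+\ker\iota^*$. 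By exactness of the dual sequence $\ker\iota^*=\operatorname{im}\pi^*$, which has dimension $n$; this both explains why $\mathfrak{N}_\alpha$ is an $n$-plane and singles out the natural affine isomorphism $\Phi\colon\mathfrak{n}^*\to\mathfrak{N}_\alpha$, $\Phi(\xi)=\lambda+\pi^*(\xi)$, once $\lambda$ is taken as the origin.

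The heart of the argument is to compute the coordinates of $\Phi(\xi)$. Writing $\pi^*(\xi)=\sum_j \langle\pi^*\xi,e_j\rangle\, e_j^*$ and using the defining adjunction together with $\pi(e_j)=u_j$, namely $\langle\pi^*\xi,e_j\rangle=\langle u_j,\xi\rangle$, the $i$-th coordinate of $\Phi(\xi)$ is $x_i=\lambda_i+\langle u_i,\xi\rangle$. Hence $\Phi(\xi)$ lies on the coordinate hyperplane $\{x_i=0\}$ precisely when $\langle u_i,\xi\rangle+\lambda_i=0$, that is, precisely when $\xi\in H_i$. Therefore $\Phi(H_i)=\mathfrak{N}_\alpha\cap\{x_i=0\}$, which is the asserted correspondence at the level of the abstract identification $\Phi$.

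It remains to pass from the abstract isomorphism $\Phi$ to the concrete picture obtained by projecting $\mathfrak{N}_\alpha$ onto a standard $\mathbb{R}^n$. I would take $\pi_{\mathbb{R}^n}$ to be the coordinate projection forgetting $m$ of the $x_j$, chosen so that its restriction to the direction space $\operatorname{im}\pi^*$ is injective; concretely, dropping the coordinates indexed by a set $S$ of size $m$ with $\{u_i\}_{i\notin S}$ spanning $\mathfrak{n}$, which exists because the $u_i$ span $\mathfrak{n}$ (and under the regularity hypothesis any $n$ independent ones form a basis). Then $\pi_{\mathbb{R}^n}\circ\Phi$ is an affine isomorphism $\mathfrak{n}^*\xrightarrow{\sim}\pi_{\mathbb{R}^n}(\mathfrak{N}_\alpha)$, which supplies the identification in the statement and carries $H_i$ to $\pi_{\mathbb{R}^n}(\mathfrak{N}_\alpha\cap\{x_i=0\})$ by the previous step. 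The only genuinely delicate point is the injectivity of this projection on $\mathfrak{N}_\alpha$, i.e. that $m$ coordinates can be discarded without collapsing the slice; I expect this to be the main (though mild) obstacle, since it is exactly where the spanning and regularity of the $\{u_i\}$ are used, while the rest is the routine computation above.
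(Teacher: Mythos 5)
Your proof is correct. There is, in fact, nothing in the paper to compare it against: the authors do not prove Proposition \ref{pro:arrcut} at all, but only refer to \cite{vCZ11}, so your argument supplies what the paper omits. The three steps you take are the natural ones and are carried out correctly: (i) exactness of the dualized sequence gives $\mathrm{Ker}\,\iota^* = \mathrm{Im}\,\pi^*$, hence the affine parametrization $\Phi(\xi)=\lambda+\pi^*\xi$ of $\mathfrak{N}_\alpha$ by $\mathfrak{n}^*$ with $\lambda$ as origin; (ii) the coordinate computation $\langle \Phi(\xi), e_i\rangle = \lambda_i + \langle u_i,\xi\rangle$, which uses $\pi(e_i)=u_i$ and immediately yields $\Phi(H_i)=\mathfrak{N}_\alpha\cap\{x_i=0\}$; and (iii) the choice of a coordinate projection $\pi_{\mathbb{R}^n}$ injective on $\mathfrak{N}_\alpha$. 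Your criterion in (iii) is exactly right: dropping the coordinates indexed by $S$ is injective on the direction space $\mathrm{Im}\,\pi^*$ if and only if $\{u_j\}_{j\notin S}$ has no common annihilator in $\mathfrak{n}^*$, i.e.\ spans $\mathfrak{n}$, and such an $S$ of size $m$ exists because $\pi$ is surjective; as you observe, only spanning is needed here, not regularity of the fan. The composite $\pi_{\mathbb{R}^n}\circ\Phi$ then identifies $\mathfrak{n}^*$ with $\pi_{\mathbb{R}^n}(\mathfrak{N}_\alpha)$, sending $0$ to $\pi_{\mathbb{R}^n}(\lambda)$ and $H_i$ to $\pi_{\mathbb{R}^n}(\mathfrak{N}_\alpha\cap\{x_i=0\})$, which is precisely the assertion.
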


The proof can be found in \cite{vCZ11}. Consequentially, $\mathcal{A}$ is non-simplicial means that there is
a point $\lambda^\prime$ in $\mathfrak{N}_\alpha$ more than $n+1$
hyperplanes intersect in, i.e. more than $n+1$ coordinates
$\lambda_i^\prime$ are zero. For $\lambda^\prime$ is also a new lift of
$\alpha$, and $\alpha=\lambda_i^\prime \iota^* e_i^*$ at most
involve $m-1$ terms, thus $\alpha$ must lie in a wall and vice
versa.

Reader should notice that, even if $\alpha$ is singular or
equivalently $\mathcal{A}$ is non-simplicial, the toric variety
still has possibility to be smooth. For instance, $\lambda=(1,0,0)$,
then $\alpha=\theta_1^*$ in Example \ref{ex:1dim:arr}, gives rise to
$\mathbb{C}P^1$; or $\lambda=(1,1,1,2)$, then
$\alpha=3(\theta_1^*+\theta_2^*)$ in Example \ref{ex:2dim:arr},
gives rise to $\mathbb{C}P^2$.

The connected components of $\mathfrak{m}^*_{+reg}$ are called
chambers. Within a chamber, the toric varieties are all
biholomorphic. Therefore the only interesting variations are
$\alpha$ moving into the wall or across the wall.

\begin{example}\label{ex:fanchanged}
For the fan in Example \ref{ex:2dim:arr}, we have $\iota^*
e_1^*=\iota^* e_3^*=\theta_1^*$, $\iota^*
e_2^*=\theta_1^*+\theta_2^*$, $\iota^* e_4^*=\theta_2^*$. It's
chamber is drawn in Figure \ref{fig:tor:chamber:a}. If we change the
fan to: $u_1=f_1$, $u_2=u_4=-f_2$, $u_3=-f_1-f_2$, i.e. change the
direction of $u_2$, then we will have $\iota^* e_1^*=\iota^*
e_3^*=\theta_1^*$, $\iota^* e_2^*=-\theta_1^*-\theta_2^*$, $\iota^*
e_4^*=\theta_2^*$, as Figure \ref{fig:tor:chamber:b}.
\end{example}

\begin{figure}[h]
\centering \subfigure[the chamber of Example \ref{ex:2dim:arr}]{
\label{fig:tor:chamber:a} %% label for first subfigure
\includegraphics[width=2.0in]{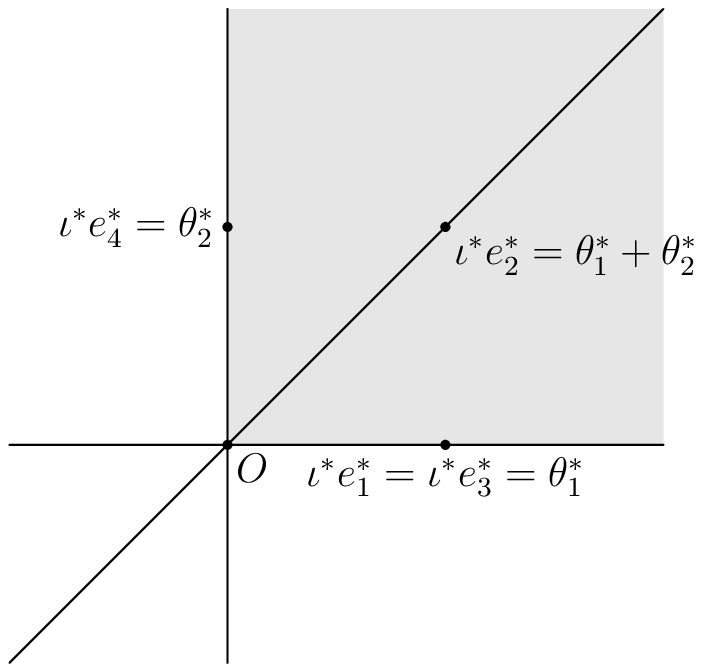}}
\hspace{0.3in} \subfigure[the chamber of same arrangement but
different orientation]{
\label{fig:tor:chamber:b} %% label for second subfigure
\includegraphics[width=2.4in]{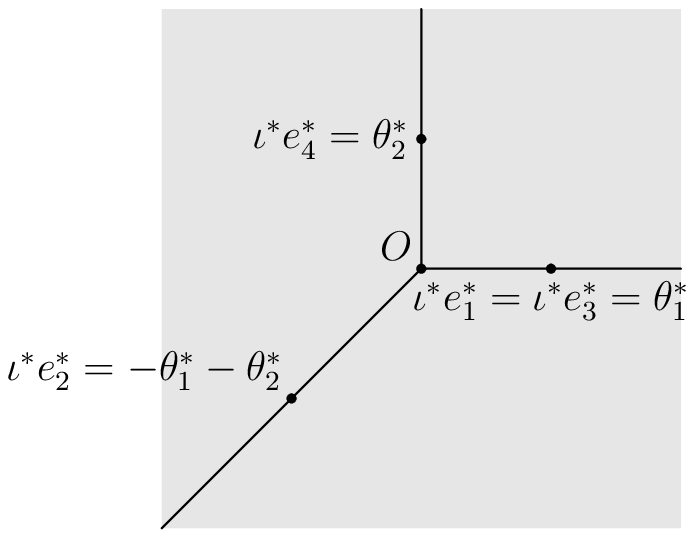}}
\caption{two examples of chamber}
\label{fig:tor:chamber} %% label for entire figure
\end{figure}

\section{Variation of toric variety}
Since symplectic quotient is not suitable for the study of
variation, a more intrinsic way to describe toric variety is
necessary.

\subsection{GIT quotient}

Let us consider the GIT quotient of $\mathbb{C}^d$ by $M_\mathbb{C}$
with respect to the linearization induced by $\alpha \in
\mathfrak{m}^*_\mathbb{Z}$. The element $\alpha$ induces the
character $ \chi_\alpha: M_\mathbb{C} \rightarrow \mathbb{C}^\times$
by $\chi_\alpha(\mathrm{Exp} (\rho \theta))=e^{\langle \alpha,
\theta \rangle\rho}$, where $M_\mathbb{C}$ is the complexfication of
$M$ and $\theta \in \mathfrak{m}$, $\rho \in \mathbb{C}$. Let $L^{\otimes m} = \mathbb{C}^d
\times \mathbb{C}$ be the trivial holomorphic line bundle on which
$M_\mathbb{C}$ acts as
\begin{equation*}
((z), v)_m \zeta = ((z) \zeta, v \chi_\alpha(\zeta)^m)_m.
\end{equation*}
A point $(z)$ is $\alpha$-semi-stable if and only if there exists
$m\in \mathbb{Z}_{>0}$ and a polynomial $f(p)$, where $p \in
\mathbb{C}^d$, such that $f(p)$ viewed as a section of $L^{\otimes
m}$ is invariant under the action, that is $f((p)\zeta)=
f(p)\chi_\alpha(\zeta)^m$ for any $\zeta \in M_\mathbb{C}$, and
additionally $f(z) \neq 0$. We denote the set of
$\alpha$-semi-stable points in $\mathbb{C}^d$ by
$(\mathbb{C}^d)^{\alpha-ss}$, then there is a categorical quotient
$\phi: (\mathbb{C}^d)^{\alpha-ss} \rightarrow
(\mathbb{C}^d)^{\alpha-ss}//M_{\mathbb{C}}$, where
$(\mathbb{C}^d)^{\alpha-ss}//M_{\mathbb{C}}$ is the GIT quotient of
$\mathbb{C}^d$ by $M_\mathbb{C}$ respect to $\alpha$(cf.
\cite{MFK94}, and the readers are highly recommended to consult the
lecture notes \cite{Do03} or \cite{Th06} if they prefer the variety
rather than the abstract scheme setup). Sometimes
$\mathbb{C}^d//_\alpha M_{\mathbb{C}}$ stands for this GIT quotient.
We cite several basic properties of GIT quotient without proof.
\begin{lemma}
For any point $p \in (\mathbb{C}^d)^{\alpha-ss}//M_{\mathbb{C}}$,
the fiber $\phi^{-1}(p)$ consists of finitely many
$M_{\mathbb{C}}$-orbits. Moreover, each fiber contains the unique
closed $M_{\mathbb{C}}$-orbits in $(\mathbb{C}^d)^{\alpha-ss}$. Thus
the categorical quotient
$(\mathbb{C}^d)^{\alpha-ss}//M_{\mathbb{C}}$ can be identified with
the set of closed $M_{\mathbb{C}}$-orbits in
$(\mathbb{C}^d)^{\alpha-ss}$.
\end{lemma}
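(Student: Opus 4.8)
The plan is to deduce all three assertions from the structure theory of the affine (and projective-over-affine) GIT quotient of a reductive group, here the torus $M_{\mathbb{C}}$, supplemented by a combinatorial argument special to torus actions for the finiteness in the first claim. Throughout, ``closed'' and the orbit closures $\overline{M_{\mathbb{C}} z}$ are understood relative to the open invariant set $(\mathbb{C}^d)^{\alpha-ss}$, on which $\phi$ is a categorical quotient. The two inputs I would take from general GIT are: (i) $\phi$ is constant on $M_{\mathbb{C}}$-orbits, and the invariant sections of $\bigoplus_{m\ge 0} L^{\otimes m}$ separate any two disjoint closed $M_{\mathbb{C}}$-invariant subsets of the semistable locus; and (ii) consequently two semistable points lie in one fiber of $\phi$ if and only if the closures of their orbits meet.

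First I would establish the unique closed orbit. Given $z \in (\mathbb{C}^d)^{\alpha-ss}$, among the orbits contained in $\overline{M_{\mathbb{C}} z}$ choose one, $O$, of minimal dimension; since the boundary $\overline{M_{\mathbb{C}} w}\setminus M_{\mathbb{C}} w$ of any orbit is a union of strictly lower-dimensional orbits, minimality forces $\overline{O}\setminus O=\emptyset$, so $O$ is closed. Thus every fiber contains at least one closed orbit. For uniqueness, two distinct closed orbits are disjoint closed invariant sets, hence separated by an invariant section by (i), and being separated they cannot share a fiber by (ii). Therefore each fiber contains exactly one closed orbit, and sending a point of $(\mathbb{C}^d)^{\alpha-ss}//M_{\mathbb{C}}$ to the closed orbit in its fiber gives the asserted bijection with the set of closed orbits.

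The finiteness in the first claim is where the torus structure does the work. For $z=(z_1,\dots,z_d)$ set $\mathrm{supp}(z)=\{\, i : z_i\neq 0 \,\}$; since $M_{\mathbb{C}}$ rescales each coordinate it preserves $\mathrm{supp}$, and there are only $2^d$ possible supports. On the locally closed stratum where $\mathrm{supp}(z)=S$ is fixed, the action is that of the torus $M_{\mathbb{C}}$ on the coordinate torus $(\mathbb{C}^\times)^S$, whose orbits are the cosets of the image subtorus and are therefore separated by the characters of $(\mathbb{C}^\times)^S$ trivial on that subtorus. Each such character is realized on the semistable locus as a ratio of two equal-weight semi-invariant sections, hence is constant along fibers of $\phi$; consequently two semistable points with the same support that lie in one fiber already lie in one orbit. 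Thus each support contributes at most one orbit to a given fiber, so every fiber is a union of at most $2^d$ orbits.

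The step I expect to be the main obstacle is making input (i) precise on $(\mathbb{C}^d)^{\alpha-ss}$ rather than on all of $\mathbb{C}^d$: separation of disjoint closed invariant sets is immediate for the affine quotient $\mathbb{C}^d//M_{\mathbb{C}}$, and one must transport it to the merely quasi-affine semistable locus, which carries the projective-over-affine quotient determined by $\chi_\alpha$. I would handle this by passing to the homogeneous coordinate ring $\bigoplus_{m\ge 0} H^0(L^{\otimes m})^{M_{\mathbb{C}}}$ and working on the affine charts cut out by the nonvanishing invariant sections, where the affine theory applies verbatim; the combinatorial finiteness argument of the preceding paragraph is insensitive to this subtlety.
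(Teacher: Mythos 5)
The paper never proves this lemma at all: it is one of the ``basic properties of GIT quotient'' that the authors explicitly cite \emph{without proof}, so there is no in-paper argument to compare against and your proposal has to stand on its own. Two of your three steps do stand. The existence of a closed orbit in each fiber (take an orbit of minimal dimension in $\overline{M_{\mathbb{C}}z}$, use that orbit boundaries consist of strictly lower-dimensional orbits), the uniqueness via separation of disjoint closed invariant sets, and the resulting identification of $(\mathbb{C}^d)^{\alpha-ss}//M_{\mathbb{C}}$ with the set of closed orbits are all correct; your device of working on the invariant affine charts $\{f\neq 0\}$ of the semistable locus, where the affine separation theory applies, is exactly the standard way to handle the quasi-affine subtlety you flag.

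The finiteness argument, however, has a genuine gap, and the step that fails is the assertion that a ratio of two equal-weight semi-invariants is ``constant along fibers of $\phi$.'' Such a ratio is constant along orbits, and along an orbit closure wherever the denominator is nonzero; but a fiber of a categorical quotient is a union of many orbits glued along one closed orbit, and for the non-closed orbits both numerator and denominator typically vanish on that closed orbit, so continuity transfers no information from one orbit to another. The paper's own Example \ref{ex:2dim:arr} refutes your intermediate claim: there $M_{\mathbb{C}}=\{(\lambda,\lambda\mu,\lambda,\mu)\}\subset(\mathbb{C}^\times)^4$, and taking $\alpha_1=\theta_1^*$ on the boundary wall, the semistable locus is $\{(z_1,z_3)\neq(0,0)\}$ and the quotient is $\mathbb{C}P^1$ via $z\mapsto[z_1:z_3]$. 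Every point $(1,a,0,b)$ with $ab\neq 0$ has support $\{1,2,4\}$ and lies in the fiber over $[1:0]$, since letting $\mu\to 0$ shows its orbit closure contains the closed orbit through $(1,0,0,0)$; yet distinct $[a:b]$ give distinct orbits, detected precisely by your separating character $z_2/(z_1z_4)$ (a ratio of the equal-weight semi-invariants $z_2$ and $z_1z_4$), which takes the value $a/b$ and is therefore \emph{not} constant on the fiber --- both $z_2$ and $z_1z_4$ vanish on the closed orbit, which is why no contradiction with continuity arises. Worse, this exhibits a $\mathbb{C}P^1$-family of orbits in a single fiber, so the finiteness assertion of the lemma is itself false at wall values of $\alpha$ --- the only values where it has any content, since for regular $\alpha$ every semistable orbit is closed and each fiber is a single orbit. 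So this part cannot be proved by any argument; the lemma should be read as (and the rest of the paper only ever uses) the unique-closed-orbit statement and the identification of the quotient with the set of closed orbits, which your proposal does establish correctly.
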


Unfortunately, the definition of stability respect to linearization
is only effective when $\alpha \in \mathfrak{m}^*_\mathbb{Z}$, i.e.
only corresponds to the algebraic toric variety with line bundle
described by Newton ploytope with integer vertices. Following Konno's
method in hyperk\"{a}hler case, it can be generalized to any complex
manifold.

\begin{definition}\label{def:sta:num}
Suppose that $\alpha \in \mathfrak{m}^*$,

\noindent (1) A point $z \in \mathbb{C}^d$ is $\alpha$-semi-stable
if and only if
\begin{equation}\label{eq:sta:num}
\alpha \in \sum^d_{i=1} \mathbb{R}_{\geq 0}|z_i|^2 \iota^* e_i^*.
\end{equation}

\noindent (2) Suppose $z \in (\mathbb{C}^d)^{\alpha-ss}$. Then the
$M_\mathbb{C}$-orbit through $z$ is closed in
$(\mathbb{C}^d)^{\alpha-ss}$ if and only if
\begin{equation}\label{eq:clo:num}
\alpha \in \sum^d_{i=1} \mathbb{R}_{> 0}|z_i|^2 \iota^* e_i^*.
\end{equation}
\end{definition}
This definition of stability coincides the original GIT one when
$\alpha \in \mathfrak{m}^*_\mathbb{Z}$. For convenience we give the proof of (1), and readers can find the
essential proof of (2) in \cite{Ko08}. Suppose $(z) \in
(\mathbb{C}^d)^{\alpha-ss}$. Then there exists $m \in
\mathbb{Z}_{>0}$ and a polynomial $f(p_1, \dots, p_d)$ such that
$f((p)\zeta) = f(p) \chi_\alpha(\zeta)^m$ and $f(z)\neq 0$. So we
can select out a monomial $f_0(p) = \prod_{i=1}^d p_i^{a_i}$, where
$a_i \in \mathbb{Z}_{>0}$, such that
$f_0((p)\zeta)=f_0(p)\chi_\alpha(\zeta)^m$ and $f_0(z)\neq0$. The
second condition implies that $a_i = 0 \ \text{if} \ z_i = 0$.
Moreover, the first condition implies $m \alpha = \sum_{i=1}^d a_i
\iota^* e_i^*$. To see this, let $\theta_k$ be the standard basis of
$\mathfrak{m}$ and $\rho \in \mathbb{C}^\times$, we have
$\chi(\mathrm{Exp}(\rho \theta_k))=e^{\rho\langle \alpha, \theta_k
\rangle}$ and $(p)\mathrm{Exp}(\rho \theta_k)=(p_i
e^{\rho\langle\iota\theta_k,e_i^*\rangle})=(p_i e^{\rho\langle
\iota^* e_i^*, \theta_k\rangle})$. Thus we proved Equation
(\ref{eq:sta:num}).

\begin{proposition}

\noindent (1)If we fix $\alpha \in \mathfrak{m}^*$, then the natural
map $\sigma: X(\alpha) \rightarrow
(\mathbb{C}^d)^{\alpha-ss}//M_{\mathbb{C}}$ is a homeomorphism(if
$\alpha \notin \mathfrak{m}_+^*$, both sets are empty).

\noindent (2)If $\alpha \in \mathfrak{m}_{+reg}^*$, then every
$M_\mathbb{C}$-orbit is closed in $(\mathbb{C}^d)^{\alpha-ss}$. So
the categorical quotient
$(\mathbb{C}^d)^{\alpha-ss}//M_{\mathbb{C}}$ is a geometric quotient
$(\mathbb{C}^d)^{\alpha-ss}/M_{\mathbb{C}}$.
\end{proposition}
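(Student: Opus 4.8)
The plan is to recognize this statement as the Kempf--Ness correspondence specialized to the diagonal torus action, and to exploit the explicit numerical stability criteria of Definition \ref{def:sta:num}. The central object I would introduce is, for each fixed $z\in\mathbb{C}^d$, the Kempf--Ness function on $\mathfrak{m}$
\[
F_z(\xi)=\tfrac{1}{2}\sum_{i=1}^d |z_i|^2\, e^{2\langle \iota^* e_i^*,\xi\rangle}-2\langle\alpha,\xi\rangle .
\]
Since $\mathrm{Exp}(\xi)$ for real $\xi\in\mathfrak m$ scales $|z_i|^2$ by $e^{2\langle\iota^* e_i^*,\xi\rangle}$, differentiation gives $dF_z(\xi)=2\bigl(\mu(\mathrm{Exp}(\xi)\cdot z)-\alpha\bigr)$, so a critical point of $F_z$ is exactly a point of the $M_\mathbb{C}$-orbit of $z$ lying in $\mu^{-1}(\alpha)$. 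Moreover $F_z$ is convex, being a sum of exponentials of linear functionals minus a linear term. This single observation drives both existence (surjectivity) and uniqueness (injectivity) in part (1).

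For part (1) I would argue as follows. \emph{Well-definedness}: if $z\in\mu^{-1}(\alpha)$ then $\alpha=\sum_{z_i\neq0}\tfrac12|z_i|^2\iota^* e_i^*$ has strictly positive coefficients on the nonzero coordinates, so by \eqref{eq:clo:num} the orbit $M_\mathbb{C}\!\cdot z$ is closed; thus $\sigma([z])$ is well defined and depends only on the $M$-orbit. \emph{Injectivity}: if $z,z'\in\mu^{-1}(\alpha)$ lie in one $M_\mathbb{C}$-orbit, then absorbing the compact factor into $M$ we may write $z'=\mathrm{Exp}(\xi)\cdot z$ with $\xi\in\mathfrak{m}$, so $0$ and $\xi$ are both critical points of the convex $F_z$; the restriction $g(t)=F_z(t\xi)$ is then convex with $g'(0)=g'(1)=0$, hence $g''\equiv0$, forcing $\langle\iota^* e_i^*,\xi\rangle=0$ for every $i$ with $z_i\neq0$, so $\mathrm{Exp}(\xi)$ fixes $z$ and $z'=z$. \emph{Surjectivity}: every closed orbit meets $\mu^{-1}(\alpha)$, which I obtain by minimizing $F_z$; along a direction $v$ either some $\langle\iota^* e_i^*,v\rangle>0$ (the exponential sends $F_z\to+\infty$) or all are $\le0$, in which case \eqref{eq:clo:num} gives $\langle\alpha,v\rangle<0$ and the linear term dominates, the only flat directions being $\mathrm{Lie}\,M_z$, which are harmless after passing to $\mathfrak m/\mathrm{Lie}\,M_z$; coercivity of the convex $F_z$ there yields a minimizer. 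Finally both sides carry the quotient topology induced from $\mu^{-1}(\alpha)\hookrightarrow(\mathbb{C}^d)^{\alpha-ss}$, so $\sigma$ is continuous, and continuity of $\sigma^{-1}$ follows from properness of $\mu$. The emptiness clause is immediate: if $\alpha\notin\mathfrak{m}_+^*$ then \eqref{eq:sta:num} has no solution, so $\mu^{-1}(\alpha)=\varnothing$.

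For part (2) I would avoid a direct face computation and instead combine part (1) with a dimension count. By the preceding Proposition, $\alpha\in\mathfrak{m}^*_{+reg}$ is a regular value of $\mu$, and from its proof this means $\mathrm{span}\{\iota^* e_i^*:z_{0,i}\neq0\}=\mathfrak{m}^*$ for every $z_0\in\mu^{-1}(\alpha)$; equivalently $\mathrm{Lie}\,M_{z_0}=(\mathrm{span})^{\perp}=0$, so any closed orbit meeting $\mu^{-1}(\alpha)$ has full dimension $m$. Now take any semistable $z$. By the cited Lemma, $\overline{M_\mathbb{C}\!\cdot z}$ contains a unique closed orbit $M_\mathbb{C}\!\cdot z_0$, and by part (1) a translate of $z_0$ lies in $\mu^{-1}(\alpha)$, whence $\dim M_\mathbb{C}\!\cdot z_0=m$. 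Since boundary strata of an orbit closure have strictly smaller dimension and $\dim M_\mathbb{C}\!\cdot z\le m$, the closed orbit cannot sit in the boundary, so $M_\mathbb{C}\!\cdot z_0=M_\mathbb{C}\!\cdot z$ and the orbit of $z$ is already closed. Thus on the regular locus semistable $=$ closed-orbit, each fibre of $\phi$ is a single orbit, and the categorical quotient is geometric.

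The hardest part, I expect, is making the surjectivity step of part (1) fully rigorous: the passage to $\mathfrak m/\mathrm{Lie}\,M_z$ and the claim that a convex function tending to $+\infty$ along every nonconstant ray is coercive there, together with the continuity of $\sigma^{-1}$, are the only genuinely analytic inputs and deserve the most care. Once existence and uniqueness of the orbit representative in $\mu^{-1}(\alpha)$ are secured, part (2) is comparatively formal, its only delicate point being the clean identification of ``regular value'' with ``full-dimensional stabiliser'' supplied by the earlier Proposition.
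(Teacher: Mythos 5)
Your overall strategy is sound, and it is worth noting that the paper itself never proves this proposition: it defers entirely to \cite{Ko08} (with \cite{KN78}, \cite{MFK94}, \cite{Na99} for the general Kempf--Ness principle), so what you have written is a genuine reconstruction rather than a paraphrase. Most of it holds up. The convexity of $F_z(\xi)$ and the identification of its critical points with orbit points in $\mu^{-1}(\alpha)$ are correct in the paper's conventions; the injectivity argument via the one-variable convex function $g(t)=F_z(t\xi)$ is complete; the surjectivity argument correctly uses the closed-orbit criterion \eqref{eq:clo:num} to get growth along every ray, and you correctly isolate the flat directions as $\mathrm{Lie}\,M_z$ and quotient them out before invoking coercivity of a convex function that blows up along every ray. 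Part (2) --- closed orbits meet $\mu^{-1}(\alpha)$ by surjectivity, have trivial stabiliser algebra there because $\alpha$ is a regular value, hence have dimension $m$, hence cannot lie in the boundary of any orbit closure --- is clean, correct, and independent of the step criticised below.

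The genuine gap is the last step of part (1): you claim ``continuity of $\sigma^{-1}$ follows from properness of $\mu$,'' but $\mu$ is \emph{not} proper in this setting. The paper explicitly allows the polytope $\Delta$ to be unbounded, i.e.\ $X(\alpha)$ noncompact. Concretely, take $d=2$ and $\mathfrak{m}=\mathbb{R}(e_1-e_2)$, so that $u_1=u_2=f_1$ (a regular fan), $\iota^*e_1^*=-\iota^*e_2^*=\theta^*$ and $\mu(z)=\tfrac12\bigl(|z_1|^2-|z_2|^2\bigr)\theta^*$: every level set is a noncompact hypersurface, $X(\alpha)\cong\mathbb{C}$, and preimages of points under $\mu$ are noncompact, so $\mu$ is not proper and the ``continuous bijection from a compact space to a Hausdorff space'' shortcut is unavailable. (Noncompact pieces of exactly this kind occur throughout the paper, e.g.\ the copies of $\mathbb{C}$ in the extended cores of Section 4.) Continuity of $\sigma^{-1}$ in the noncompact case is precisely the nontrivial analytic content of the Kempf--Ness homeomorphism --- it is usually obtained either from the gradient-flow retraction of $(\mathbb{C}^d)^{\alpha-ss}$ onto $\mu^{-1}(\alpha)$, or from continuity in $z$ of the minimiser of $F_z$, or by showing $\sigma$ is a proper map (for instance by checking that both $X(\alpha)$ and $\mathbb{C}^d//_\alpha M_{\mathbb{C}}$ map properly and compatibly to the affine quotient $\mathrm{Spec}\,\mathbb{C}[z_1,\dots,z_d]^{M_\mathbb{C}}$). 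As it stands, your proposal establishes that $\sigma$ is a continuous bijection but not that it is a homeomorphism; this is exactly the part the paper outsources to \cite{Ko08} and \cite{Na99}, and it needs either one of the arguments above or an explicit appeal to those references.
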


Readers could consult \cite{Ko08} for the proof. Thus the symplectic
quotient $X(\alpha)$ can be identified with the GIT quotient
$(\mathbb{C}^d)^{\alpha-ss}//M_{\mathbb{C}}$ in both algebraic and
holomorphic case. This principle was established in \cite{KN78},
\cite{MFK94} in the algebraic case. The general holomorphic version
is proved in \cite{Na99}. Variation of a toric variety with respect to fixed fan, means
changing $\alpha$ the value of moment map in $\mathfrak{m}^*$(more
precisely $\mathfrak{m}_+^*$), equals to altering the linearization
of the GIT quotient. The variation of abstract GIT quotient had been studied thoroughly in \cite{Th96}
and \cite{DH98}, but the toric variety case has its
independent interest. This is because how $X(\alpha)$ varies can be read off from the variation of $\mathcal{A}$ directly. We will reprove
some of their results in toric variety case, leading to more special
consequence.

\subsection{Natural morphism and flip}

The phenomena of $\alpha$ moving from the interior of chamber into
the wall is described by the so called natural morphism.
Suppose $\alpha_1$ is in a generic\footnote{Does not lie in the
intersection with any other wall.} position of the wall $W_1$ and
$\alpha^+$, $\alpha^-$ lie in the chamber $\mathcal{C}^+$ and
$\mathcal{C}^-$, s.t. $W_1 \subset \overline{\mathcal{C}^\pm}$. By
the stability condition Equation (\ref{eq:sta:num}),
$(\mathbb{C}^d)^{\alpha^\pm-ss} \subset
(\mathbb{C}^d)^{\alpha_1-ss}$, inducing the natural morphisms
between GIT quotients $\pi^\pm : X(\alpha^\pm) \rightarrow
X(\alpha_1)$. The toric variety $X(\alpha_1)$ may have singularity and these natural
morphisms are kinds of ``desingularization". To see this, let $G_1$
be the 1-dimensional isotropy subgroup orthogonal to the wall
$W_1$, and $\theta_1$ is a non-zero element in $\mathrm{Lie} G_1$
s.t. $\langle\theta_1, \alpha_1\rangle>0$, set $J_1 = \{i\in
\{1,\cdots,d\}| \langle \iota^*e_i^*,\theta_1\rangle \neq 0\}$, then
$J_1$ divides into $J_1^+ = \{i \in J_1\}|
\langle\iota^*e_i^*,\theta_1\rangle > 0\}$ and $J_1^+ = \{i \in
J_1\}| \langle\iota^*e_i^*,\theta_1\rangle < 0\}$. Following Konno's
methods in hyperk\"{a}hler case, we can prove Theorem \ref{thm:natmor}.

\noindent \textbf{The proof of Theorem \ref{thm:natmor}:} (1) Note
that $W_1$ can be identified with the dual space of the Lie algebra
$\mathfrak{m}_1$ of the quotient torus $M_1= M/G_1$. So $\alpha$ can
be considered as a regular element of $\mathfrak{m}_1$. Then $V_1$
is a K{\"a}hler quotient of $\mathbb{C}^{\{1,...,d\}\backslash J_1}=
\{(z) \in \mathbb{C}^d | z_i= 0 \ if \ i \in J_1\}$ by $M_1$.

\noindent (2) Choosing $\theta_1 \in \mathrm{Lie} G_1$, by Equation
(\ref{eq:sta:num}) we know that $(\mathbb{C}^d)^{\alpha^+-ss}$ is
exactly the points in $(\mathbb{C}^d)^{\alpha_1-ss}$ satisfying
\begin{equation}\label{eq:belongsta}
\text{there exists} \ i \in J_1^+ \ \text{such that} \ z_i \neq 0.
\end{equation}
For $\alpha^+$ is a regular value in $\mathfrak{m}^*$ and $\alpha_1$
a regular value in $\mathfrak{m}_1^*$, every orbit is closed
respectively. Thus $V^+$ and $V_1$ are both geometric quotient, and
$\pi^+|_{V^+} : V^+ \rightarrow V_1$ can be interpreted as replacing
$z_i$ with 0 in $[z]$ for any $i \in J_1^+$. Notice that if $(z) \in
(\mathbb{C}^d)^{\alpha^+-ss}$, then
\begin{equation}\label{eq:belongsin}
[z] \in V^+ \  \text{is equivalent to} \ z_i = 0 \ \text{for all} \
i \in J^-_1.
\end{equation}
Thus the fiber of $\pi^+|_{V^+} : V^+ \rightarrow V_1$ is
biholomorphic to $(\mathbb{C}^{\#J_1^+} \backslash
\{0\})/G_{1\mathbb{C}}$, i.e. $\mathbb{C}P^{\#J_1^+-1}$. The case of
$\pi^-$ is tautological.

\noindent (3) If $(z)\in (\mathbb{C}^d)^{\alpha_1-ss}$, by Equation
(\ref{eq:belongsta}) and (\ref{eq:belongsin}), then $[z] \in
X(\alpha^+)\backslash V^+$ equals to
\begin{equation*}
\text{there exists} \ i \in J_1^+ \ \text{such that} \ z_i \neq 0 \
\text{and} \ j \in J_1^- \ \text{such that} \ z_j \neq 0.
\end{equation*}
This means nothing but
\begin{equation*}
X(\alpha^+)\backslash
V^+=((\mathbb{C}^d)^{\alpha^+-ss}\cap(\mathbb{C}^d)^{\alpha^--ss})/M_\mathbb{C},
\end{equation*}
and similarly
\begin{equation*}
X(\alpha^-)\backslash
V^-=((\mathbb{C}^d)^{\alpha^--ss}\cap(\mathbb{C}^d)^{\alpha^+-ss})/M_\mathbb{C}.
\end{equation*}
On the other hand, by Equation (\ref{eq:clo:num}) in Definition
\ref{def:sta:num}, $(z) \in
(\mathbb{C}^d)^{\alpha^+-ss}\cap(\mathbb{C}^d)^{\alpha^--ss}$
implies the orbit $(z)M_\mathbb{C}$ is closed in
$(\mathbb{C}^d)^{\alpha_1-ss}$, finishing the proof.   \qed

Recall there are two different kinds of walls in $\mathfrak{m}_+^*$:
one is the boundary of $\mathfrak{m}^*_{+}$(may be boundaryless, for
example Figure \ref{fig:tor:chamber:b}), called boundary wall; the
other lies in interior of $\mathfrak{m}^*_{+}$, called interior
wall.

We focus on the boundary wall. Since $J_1^-$ is empty,
$(\mathbb{C}^d)^{\alpha^--ss}$ is empty and $X(\alpha^-)\backslash
V^-$ is empty either. Thus by (3) in Theorem \ref{thm:natmor},
$\pi^+$ degenerates to a bundle projection from $X(\alpha^+)$ to
$X(\alpha_1)$ with fiber $\mathbb{C}P^{\#J_1-1}$. In effect, we have
another way to see it directly. For $\alpha_1$ is on the boundary,
by the closeness criterion Equation (\ref{eq:clo:num}), the
coefficients before $\iota^* e_i^*, i \in J_1$ must all be zero,
i.e. the only closed orbit in $(\mathbb{C}^d)^{\alpha_1-ss}$ is the
orbit through the point $\{z|z_i=0, \ for \ i \in J_1\}$. These
points have a common isotropy group $G_1$, hence $X(\alpha_1)$ can
be viewed as a lower dimensional smooth toric variety corresponding
to the group action of $M_1=M/G_1$ on
$\mathbb{C}^{\{1,\dots,d\}\backslash J_1}$, and $\alpha_1 \in
(\mathfrak{m}^*_1)_{reg}$. Thus we have $V_1=X(\alpha_1)$ and
$V^+=X(\alpha^+)$, and $\pi^+: X(\alpha^+) \rightarrow X(\alpha_1)$
is a fiber bundle whose fiber is biholomorphic to $\mathbb{C}P^{\#
J_1-1}$(the general case refers to the material in \cite{Th96},
Corollary 1.13 and below).

In the interior wall case, since we know little about how $J_1$
divides into $J_1^+$ and $J_1^-$, the natural morphism related is
much more complicated. This kind of natural morphism can not
degenerate to bundle projection, and roughly speaking, is the
generalization of blowup.

We are ready to investigate the variation of $\alpha$
cross the interior wall. The cross wall phenomena can be recovered
by the two adjacent natural morphisms. For both natural morphisms
$\pi^+$, $\pi^-$ can not degenerate, the exceptional sets $V^+$ and
$V^-$ of $V_1$ are lower dimensional. Thus $\pi^+ \circ
(\pi^-)^{-1}: X(\alpha^+)\backslash V^+ \rightarrow
X(\alpha^-)\backslash V^-$ is a biholomorphic map, and a birational
map between $X(\alpha^+)$ and $X(\alpha^-)$, which is called flip by
Thaddeus(cf. \cite{Th96}).

\subsection{Relation with fibred toric variety}

Based on above discussion, we know that for a given toric variety $X(\alpha)$, if $\alpha$ lies in the chamber next to the boundary wall, then $X(\alpha)$ is a fibred toric variety. In fact, we will show that all fibred toric varieties come from this kind construction.

We'd better give some configuration of the arrangement and the
polytope of the variation of the fibred toric variety. By
Proposition \ref{pro:arrcut}, up to biholomorphic equivalent class,
the variation of $\alpha$ into the wall or cross the wall is
equivalent to moving some hyperplane $H_i$, $i \in J_1$ in
$\mathcal{A}$ into or cross singular set of $\mathcal{A}_1$. Thus
fibred toric variety is characterized as the hyperplane $H_i$ will
never cross any other vertex of its polytope $E$ before the volume
of this polytope tends to zero, i.e. $E$ degenerates to a lower
dimensional polytope. Based on this observation, we can give a
configuration of the fibred polytope.

\begin{proposition}
The polytope $E$ defines a fibred toric variety has form of a
product: $E=\Delta(r) \times F$ where $\Delta(r)$ is the standard
polytope defining $\mathbb{C}P^r$, and $F$ is a $s$-dimensional
polytope.
\end{proposition}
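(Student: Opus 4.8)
The plan is to work entirely on the polytope side, feeding in the characterization of fibred toric varieties established just above: $X$ is fibred precisely when it arises from a boundary wall, so that (after relabeling) the moving hyperplanes are exactly $H_i$, $i\in J_1$, with $\#J_1=r+1$, and the fiber is $\mathbb{C}P^{\#J_1-1}=\mathbb{C}P^r$. The goal is to produce a direct sum $\mathfrak{n}^*=A\oplus B$ with $\dim A=r$ and $\dim B=s$ such that the $J_1$-hyperplanes cut out a standard simplex $\Delta(r)$ inside a translate of $A$ while the remaining hyperplanes cut out an $s$-dimensional polytope $F$ inside a translate of $B$. The product $E=\Delta(r)\times F$ then follows once, under the dual splitting $\mathfrak{n}=A^{\circ}\oplus B^{\circ}$, the two families of facet normals are shown to lie in $A^{\circ}=\mathrm{Ann}(B)$ and $B^{\circ}=\mathrm{Ann}(A)$ respectively.

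First I would pin down the simplex factor. Since $\theta_1\in\mathrm{Lie}\,G_1\subset\mathfrak{m}$ and, after the equal-weight normalization forced by the fiber being the \emph{smooth} space $\mathbb{C}P^r$ rather than a weighted projective space, one has $\langle\iota^*e_i^*,\theta_1\rangle=1$ for $i\in J_1$ and $0$ otherwise, it follows that $\iota\theta_1=\sum_{i\in J_1}e_i$. Because $\iota\theta_1\in\mathrm{im}\,\iota=\ker\pi$, applying $\pi$ gives
\[
\sum_{i\in J_1}u_i=\pi\Big(\sum_{i\in J_1}e_i\Big)=\pi(\iota\theta_1)=0 .
\]
Thus the $r+1$ normals $\{u_i\}_{i\in J_1}$ sum to zero; regularity of $\mathcal{A}$ forces any $r$ of them to be independent, so they span an $r$-dimensional subspace $U^{\circ}:=\mathrm{span}\{u_i:i\in J_1\}\subset\mathfrak{n}$ and are exactly the fan of $\mathbb{C}P^r$. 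Restricting to the $J_1$-inequalities therefore carves out a standard $r$-simplex, the factor $\Delta(r)$, and identifies $U^{\circ}$ with the fiber-torus direction (the image under $\pi$ of $\mathrm{span}\{e_i:i\in J_1\}$).

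Next I would produce the base factor. The base $X_1=X(\alpha_1)$ is the quotient of $\mathbb{C}^{\{1,\dots,d\}\setminus J_1}$ by $M_1=M/G_1$, and its polytope $F$ — the degenerate limit of $E$ at the wall — is cut out in $\mathfrak{n}_1=\mathfrak{n}/U^{\circ}$ by the images $\bar u_j$ of the normals $u_j$, $j\notin J_1$. To split $E$ as a product I need a decomposition $\mathfrak{n}=U^{\circ}\oplus W^{\circ}$ in which the base normals actually lie in $W^{\circ}$; equivalently, writing $A=\mathrm{Ann}(W^{\circ})$ and $B=\mathrm{Ann}(U^{\circ})$, I need $u_i\in\mathrm{Ann}(B)$ for $i\in J_1$ (automatic, since $\mathrm{Ann}(B)=U^{\circ}$ by biduality) and $u_j\in\mathrm{Ann}(A)$ for $j\notin J_1$. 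Granting this, the facet normals partition exactly into the two families $(\text{facet of }\Delta(r))\times F$ and $\Delta(r)\times(\text{facet of }F)$, and matching the offsets $\lambda_i$ via Proposition \ref{pro:arrcut} identifies $E$ affinely with $\Delta(r)\times F$.

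The main obstacle is therefore to establish $u_j\in\mathrm{Ann}(A)$ for $j\notin J_1$, equivalently $\mathrm{span}\{u_j:j\notin J_1\}\cap U^{\circ}=0$: the base normals must carry no component along the fiber direction $U^{\circ}$. A nonzero such component is exactly what produces a twisted rather than a trivial bundle (a skew, Hirzebruch-type polytope), so this is where the hypotheses must be used at full strength. My intended route is to feed the no-vertex-crossing characterization into the one-parameter collapse of the $J_1$-slab onto $F$: constancy of the vertex--facet incidences throughout the collapse should force every non-moving facet to meet the slab in a face that is itself a product over $F$, pinning each $\bar u_j$ down to the unique lift lying in $\mathrm{Ann}(A)$ and so killing the fiber component. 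I expect the delicate part of the argument to be precisely this incidence bookkeeping during the degeneration — verifying that no facet silently changes its adjacency as the simplex shrinks to a point — rather than the final assembly of the two factors.
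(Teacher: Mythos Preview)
Your plan aims at an \emph{affine} product decomposition $E\cong\Delta(r)\times F$, and the linchpin is the claim that $\mathrm{span}\{u_j:j\notin J_1\}\cap U^{\circ}=0$. This is false in general, and the counterexample is precisely the ``skew, Hirzebruch-type polytope'' you hoped the hypotheses would exclude. The Hirzebruch surface $\mathbb{F}_a=\mathbb{P}(\mathcal{O}\oplus\mathcal{O}(a))$ with $a\geq 1$ is an equivariant $\mathbb{C}P^1$-bundle over $\mathbb{C}P^1$, hence a fibred toric variety in the sense of Definition~\ref{def:eletor}, and it does arise from the boundary-wall picture (moving the top facet of its trapezoid down to the bottom edge crosses no vertex). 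With the fan $u_1=(1,0)$, $u_2=(-1,0)$, $u_3=(0,1)$, $u_4=(-a,-1)$ one has $J_1=\{3,4\}$, $U^{\circ}=\mathrm{span}\{u_3,u_4\}$ is all of $\mathfrak{n}$ when $a\neq 0$, and in any event $u_1,u_2$ do not span a complement to the fiber direction once $u_4$ carries a base component. The moment polytope is a genuine trapezoid; there is no affine splitting. So the ``main obstacle'' you flagged is not merely delicate---it is an obstruction, and the incidence bookkeeping you propose cannot rescue it.

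What the proposition asserts (and what the paper proves) is only a \emph{combinatorial} product: the face lattice of $E$ is that of $\Delta(r)\times F$. The paper's argument is a direct vertex--edge count in the simple polytope $E$. One fixes a facet $H$ parallel to the moving family and the limit face $F$; since $E$ is simple each vertex has $n=r+s$ edges, and the no-vertex-crossing collapse gives a surjection from the vertices of $H$ onto those of $F$ with fibers of size $r+1$ (actually the paper writes $s$ here, with the roles of $r$ and $s$ swapped relative to the statement). A double count of the edges of $H$ then forces each fiber to be a complete graph, i.e.\ a simplex $\Delta$, and the cross-edges to realize $F$; hence $H\cong\Delta\times F$ combinatorially and $E$ follows. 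If you want to salvage your normal-vector approach, the correct target is this combinatorial statement---for instance, showing that every maximal cone of the fan is a product of a cone from the $J_1$-rays and a cone from the rest---rather than a linear splitting of $\mathfrak{n}$.
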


\begin{proof}
Let $H$ be the moving hyperplane, move $H$ to a non-simplicial
arrangement, we denote the singular set as $F$. Restricting $F$ to
$E$, it is a face of the polytope $E$, and itself a lower
dimensional smooth polytope, still denoted as $F$. At the same time,
the restriction of $H$ to $E$ is also a smooth polytope of dimension
$n-1$, still denoted as $H$.

We can first assume that $E$ is bounded. Suppose $H$ has $q$ vertices, and $F$ has $p$ vertices $\{o_i, i=1,
\dots, p\}$. For $E$ is smooth, each vertex has $n$ edges. The proof divides into three steps as
follows.

Step1, notice the following two facts: one is the each vertex $o_i$
in $F$ has $r$ edges come from $F$. Another is $H$ can smoothly
shrink to $F$ without passing any other vertex, so all other $s$
edges of $o_i$ come from the vertices belong to $H$. This means $s$
vertices of $H$ will map to one vertex in $F$, i.e. $q=p \cdot s$.
Thus the vertices of $H$ can be labeled as $\{a_i^j\}, j=1,\dots,s$,
divide into $p$ groups.

Step2, consider the edges between different groups, we claim that
there are at most $r$ edges shed from one vertex in $H$ to the
vertices in other groups. This is because that by the parallel
moving, we know all the edges shed from one vertex $a_1^j$ in $H$
come from parallel moving the edges shed from $o_1$ in $F$, while
there are only $r$ edges shed from $o_1$.

So there are at most $\frac{q \cdot r}{2}$ edges between different
groups in $H$. Hence the sum of edges inside each groups are at
least:
\begin{equation}\label{eq:edgenum}
\frac{q(n-1)}{2}-\frac{q \cdot r}{2}=\frac{q(s-1)}{2}=\frac{p \cdot
s(s-1)}{2},
\end{equation}
where $\frac{q(n-1)}{2}$ is number of total edges in the polytope
$H$.

Step3, for in each individual group, there is at most
$\frac{s(s-1)}{2}$ edges between $s$ points, by Equation
(\ref{eq:edgenum}), each vertex must have exactly $r$ edges joining
other groups, and $s-1$ edges joining other $s-1$ point in its own
group constituting a $\Delta(s-1)$.

In conclusion $H$ can be written as $\Delta(s-1)\times F$, then $E$
is of the form $\Delta(s)\times F$.

If $E$ is unbounded, firstly, we apply above argument to its bounded
faces, combining the fact each radial in $H$ comes form the radial
in $F$ by parallel moving, accomplish the proof.
\end{proof}

For a fibred toric variety $X$, we endow a fixed fiber
$\mathbb{C}P^r$ the Fubini-Study metric, pull back the Fubini-Study
metric to all the fibers by the $T^s_\mathbb{C}$ action. Then we
give $X_1$ the $T^s$ invariant K{\"a}hler metric, and $X$ the
horizontal part metric which makes the projection a Riemann
submersion. For the projection is $T^s_\mathbb{C}$ equivariant, the
K{\"a}hler metric on $X$ is $T^{r+s}$ equivariant. If we rescale the
Fubini-Study metric by a number tends to zero, then $X$ will
degenerate to $X_1$, while the $n$ dimensional moment polytope of
$X$ degenerate to a $s$ dimensional moment polytope. This procedure
reproduce the variation above. Thus all fibred toric varieties arise
in this way.

\section{Toric hyperk\"{a}hler variety}
A $4n$-dimensional manifold is hyperk\"{a}hler if it possesses a
Riemannian metric $g$ which is K{\"a}hler with respect to three complex
structures $I_1$; $I_2$; $I_3$ satisfying the quaternionic relations
$I_1 I_2 = -I_2 I_1 = I_3$ etc. To date the most powerful technique
for constructing such manifolds is the hyperk\"{a}hler quotient
method of Hitchin, Karlhede, Lindstrom and Rocek(\cite{HKLR87}). We
specialized on the class of hyperk\"{a}hler quotients of flat
quaternionic space $\mathbb{H}^d$ by subtori of $T^d$. The geometry
of these spaces turns out to be closely connected with the theory of
toric varieties.

\subsection{Symplectic quotient(hyperk\"{a}hler quotient)}

Since $\mathbb{H}^d$ can be identified with $T^*\mathbb{C}^d \cong
\mathbb{C}^d\times\mathbb{C}^d$, it has three complex structures
$\{I_1, I_2, I_3\}$. The real torus $T^d=\{(\zeta_1, \zeta_2,
\cdots, \zeta_d) \in \mathbb{C}^d, |\zeta_i|=1\}$ acts on
$\mathbb{C}^d$ inducing a action on $T^*\mathbb{C}^d$ keeping the
hyperk\"{a}hler structure,
\begin{equation*}
(z,w)\zeta=(z \zeta, w \zeta^{-1}).
\end{equation*}
The subtours $M$ acts on it admitting a hyperk\"{a}hler moment map
$\mu=(\mu_\mathbb{R}, \mu_\mathbb{C}): \mathbb{H}^d \rightarrow
\mathfrak{m}^* \times \mathfrak{m}^*_\mathbb{C}$, given by,
\begin{equation*}
\mu_\mathbb{R}(z,w) =
\frac{1}{2}\sum_{i=1}^d(|z_i|^2-|w_i|^2)\iota^* e^*_i,
\end{equation*}
\begin{equation*}
\mu_\mathbb{C}(z,w) = \sum_{i=1}^d z_i w_i \iota^* e^*_i.
\end{equation*}
where the complex moment map $\mu_\mathbb{C} : \mathbb{H}^d
\rightarrow \mathfrak{m}^*_\mathbb{C}$ is holomorphic with respect
to $I_1$. Bielawski and Dancer introduced the definition of toric
hyperk{\"a}hler varieties, and generally speaking, they are not
toric varieties.

\begin{definition}[\cite{BD00}]
A toric hyperk\"{a}hler variety $Y(\alpha, \beta)$ is a
hyperk\"{a}hler quotient $\mu^{-1}(\alpha, \beta)/M$ for $(\alpha,
\beta) \in \mathfrak{m}^* \times \mathfrak{m}^*_\mathbb{C}$.
\end{definition}

The smooth part of $Y(\alpha,\beta)$ is a $4n$-dimensional
hyperk\"{a}hler manifold, whose hyperk\"{a}hler structure is denoted
by $(g, I_1, I_2, I_3)$. The quotient torus $N=T/M$ acts on
$Y(\alpha,\beta)$, preserving its hyperk\"{a}hler structure. This
residue circle action admits a hyperk\"{a}hler moment map
$\bar{\mu}=(\bar{\mu}_{\mathbb{R}}, \bar{\mu}_{\mathbb{C}})$,
\begin{equation*}
\bar{\mu}_\mathbb{R}([z,w]) =
\frac{1}{2}\sum_{i=1}^d(|z_i|^2-|w_i|^2) e^*_i,
\end{equation*}
\begin{equation*}
\bar{\mu}_\mathbb{C}([z,w]) = \sum_{i=1}^d z_i w_i  e^*_i.
\end{equation*}
Differs from the toric case, the map $\bar{\mu}$ to $\mathfrak{n}^*
\times \mathfrak{n}^*_\mathbb{C}$ is surjective, never with a
bounded image.

Parallel with section 2, we use hyperplane arrangement encoding the
quotient construction. For the moment map takes value in
$\mathfrak{m}^* \times \mathfrak{m}^*_\mathbb{C}$, the lift of
$(\alpha,\beta)$ is $(\lambda^1, \lambda^2, \lambda^3)$, s.t.
\begin{equation*}\label{eq:hyp:lif}
\begin{cases}
&\alpha=\sum \lambda_i^1 \iota^* e_i^*\\
&\beta=\sum (\lambda_i^2+\sqrt{-1}\lambda_i^3 )\iota^* e_i^*
\end{cases}
\end{equation*}
Then we can construct the arrangement of codimension 3 flats (affine
subspaces) in $\mathbb{R}^{3n}$,
\begin{equation*}
H_i=H_i^1 \times H_i^2 \times H_i^3,
\end{equation*}
where
\begin{equation*}\label{eq:hyp:arr}
H_i^h=\{x \in \mathfrak{m}^*|\langle u_i,x \rangle+\lambda_i^h=0\},
\ (h=1,2,3, \ i=1,\dots,d)
\end{equation*}
a prior with orientation $u_i$. For simplicity, we still denote this
arrangement of flats as $\mathcal{A}$. Vice versa, such a
arrangement of $\mathcal{A}$ determines a hyperk{\"a}hler quotient
$Y(\alpha, \beta)$. Different from the toric variety, toric
hyperk{\"a}hler manifolds according to the a arrangement with
different orientations will be biholomorphic to each other(cf.
\cite{vCZ11}).

\begin{example}
Let $\beta=0$ and $\alpha$ corresponds to the arrangement in Example
\ref{ex:1dim:arr}. The resulted toric hyperk\"{a}hler variety is the
desingulariztion of $\mathbb{C}^2/\mathbb{Z}_3$(cf. \cite{HS02},
section 10).
\end{example}

\subsection{Regularity}
Variation the hyperk\"{a}hler structure on a toric hyperk\"{a}hler
variety means altering $(\alpha, \beta)$ in $\mathfrak{m}^* \times
\mathfrak{m}^*_\mathbb{C}$, hence the regularity of $(\alpha,
\beta)$ is a crucial premise.

The hyperk\"{a}hler chamber structure can be defined on
$\mathfrak{m}^* \times \mathfrak{m}^*_\mathbb{C}$ rather than the
positive cone $\mathfrak{m}^*_+$ the same as the toric variety case,
and the walls are entire hyperspaces. For simplicity, we still use
the same notation. There is

\begin{proposition}[\cite{Ko08}]
(1) $(\mathfrak{m}^* \times \mathfrak{m}^*_\mathbb{C})_{reg}=
\mathfrak{m}^* \times \mathfrak{m}^*_\mathbb{C} \backslash
\bigcup^l_{s=1} W_s \otimes W_{s\mathbb{C}}$, where
$W_{s\mathbb{C}}$ is the complexification of $W_s$.

\noindent (2) If $(\alpha, \beta) \in (\mathfrak{m}^* \times
\mathfrak{m}^*_\mathbb{C})_{reg}$, then $X(\alpha, \beta)$ is a
smooth manifold.

\end{proposition}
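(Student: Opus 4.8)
The plan is to run the same differential computation that proves the K\"ahler statement $\mathfrak{m}^*_{+reg}=\mathfrak{m}^*_{+}\setminus\bigcup_s W_s$, but now for the full hyperk\"ahler moment map $\mu=(\mu_\mathbb{R},\mu_\mathbb{C})$, together with the standard fact that for a torus moment map the cokernel of $d\mu$ at a point is canonically the dual of the Lie algebra of the stabilizer. First I would show that $(z,w)\in\mathbb{H}^d$ is a regular point of $\mu$ if and only if $M$ acts there with discrete stabilizer, equivalently $\mathrm{span}_\mathbb{R}\{\iota^*e_i^*\mid (z_i,w_i)\neq(0,0)\}=\mathfrak{m}^*$. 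Differentiating gives $d\mu_\mathbb{R}=\frac12\sum_i(d|z_i|^2-d|w_i|^2)\otimes\iota^*e_i^*$ and $d\mu_\mathbb{C}=\sum_i(w_i\,dz_i+z_i\,dw_i)\otimes\iota^*e_i^*$, and the key point is that the index $i$ contributes to the image of $d\mu_{(z,w)}$ exactly when $(z_i,w_i)\neq(0,0)$, the \emph{same} index set governing both the real and complex parts.

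Next I would translate regularity of a \emph{value}: $(\alpha,\beta)$ is regular if and only if no point of $\mu^{-1}(\alpha,\beta)$ carries a positive-dimensional stabilizer. Since any positive-dimensional stabilizer contains some $G_s\in\Lambda(1)$, it is enough to understand the fixed loci $F_s=\{(z,w)\mid z_i=w_i=0 \text{ whenever } \langle\iota^*e_i^*,\theta_s\rangle\neq0\}$, where $\theta_s$ spans $\mathrm{Lie}\,G_s$. Every point of $F_s$ has $G_s$ in its stabilizer, so its image is singular; conversely every singular value lies in some $\mu(F_s)$. The inclusion $\mu(F_s)\subseteq W_s\times W_{s\mathbb{C}}$ is immediate, because on $F_s$ only the indices with $\langle\iota^*e_i^*,\theta_s\rangle=0$ survive, so both $\alpha$ and $\beta$ annihilate $\theta_s$.

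The crux is the reverse inclusion $W_s\times W_{s\mathbb{C}}\subseteq\mu(F_s)$, which is precisely what forces the singular stratum to be the \emph{product} of the real wall with the complexified wall, the feature distinguishing the hyperk\"ahler picture from the K\"ahler one. Here I would identify $F_s$ with $\mathbb{H}^{\{1,\dots,d\}\setminus J_s}$ acted on by the image $\bar M\cong M/G_s$ inside $T^{\{1,\dots,d\}\setminus J_s}$, identify $W_s$ with $\bar{\mathfrak{m}}^*$ and $W_{s\mathbb{C}}$ with $\bar{\mathfrak{m}}^*_\mathbb{C}$, and invoke surjectivity of the hyperk\"ahler moment map of a subtorus. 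Surjectivity reduces to the one-variable computation solving $|z|^2-|w|^2=2\alpha$ and $zw=\beta$ for arbitrary $(\alpha,\beta)\in\mathbb{R}\times\mathbb{C}$, composed with the surjective restriction $\iota^*$; this is exactly the mechanism behind the earlier remark that the hyperk\"ahler moment map has unbounded, all-of-space image. The delicate bookkeeping is checking that $M/G_s\hookrightarrow T^{\{1,\dots,d\}\setminus J_s}$ is an embedding, i.e. that $\mathfrak{m}\cap\mathrm{span}\{e_i\mid i\in J_s\}$ is exactly $\mathbb{R}\theta_s$, which is where the hypothesis $G_s\in\Lambda(1)$ and the regularity of the fan $\{u_i\}$ enter. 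Assembling over $s$ yields (1).

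For (2), a regular value makes $\mu^{-1}(\alpha,\beta)$ a smooth submanifold of $\mathbb{H}^d$ by the regular value theorem, and by (1) all stabilizers along it are discrete, so $M$ acts locally freely. To upgrade local freeness to freeness, and so obtain a genuine manifold rather than an orbifold, I would use the standing regularity of $\{u_i\}$: at a point with vanishing set $S^c$, the regular-value condition makes $\{u_i\mid i\in S^c\}$ linearly independent, and unimodularity then promotes them to part of a $\mathbb{Z}$-basis, which kills the finite stabilizer $M\cap\{\zeta\mid \zeta_i=1,\ i\in S\}$. A free, proper action of the compact torus $M$ yields a smooth quotient, and the three K\"ahler forms descend to make $Y(\alpha,\beta)$ hyperk\"ahler by the Hitchin--Karlhede--Lindstr\"{o}m--Ro\v{c}ek theorem. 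I expect the single genuine obstacle to be the reverse inclusion of the third paragraph; everything else is either the regular value theorem or the combinatorial translation already used in Section 2.
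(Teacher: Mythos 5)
Your proposal is essentially correct, but there is nothing in the paper to compare it against: the authors do not prove this proposition at all --- it is imported from Konno \cite{Ko08} --- and the only argument of this kind they write out is the K\"ahler analogue $\mathfrak{m}^*_{+reg}=\mathfrak{m}^*_{+}\setminus\bigcup_{s}W_s$ in Section 2. What you have written is the natural hyperk\"ahler extension of that Section 2 computation (which is also, in substance, Konno's route): regular points are exactly those where $\mathrm{span}\{\iota^*e_i^*\mid (z_i,w_i)\neq(0,0)\}=\mathfrak{m}^*$, and your observation that a \emph{single} index set governs both $\mu_{\mathbb{R}}$ and $\mu_{\mathbb{C}}$ is precisely what forces the singular values to be a union of products $W_s\times W_{s\mathbb{C}}$ rather than of larger sets such as $(W_s\times\mathfrak{m}^*_{\mathbb{C}})\cup(\mathfrak{m}^*\times W_{s\mathbb{C}})$. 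You also correctly identify the crux as the reverse inclusion, and the mechanism you invoke --- surjectivity of $(z,w)\mapsto(\tfrac12(|z|^2-|w|^2),zw)$ onto $\mathbb{R}\times\mathbb{C}$ --- is the genuinely hyperk\"ahler ingredient, absent in the K\"ahler case where the image is only the cone $\mathfrak{m}^*_+$. Two assertions you leave unproved deserve a line each, though both are standard: (i) that every positive-dimensional stabilizer contains some $G_s\in\Lambda(1)$ follows by enlarging the vanishing index set one index at a time, since each added index drops the dimension of the annihilator $\{\xi\in\mathfrak{m}\mid\langle\iota^*e_i^*,\xi\rangle=0\}$ by at most one, so it passes through dimension one, and any index set is realized by an actual point of $\mathbb{H}^d$; (ii) that $\{\iota^*e_i^*\mid i\notin J_s\}$ spans all of $W_s$ (equivalently your condition $\mathfrak{m}\cap\mathrm{span}\{e_i\mid i\in J_s\}=\mathbb{R}\theta_s$), without which $\mu(F_s)$ would only fill a proper subspace of $W_s\times W_{s\mathbb{C}}$; this holds because $G_s$ is an honest isotropy group of dimension exactly one, and the paper records it in Section 2 (``the wall $W_s$ is spanned by $\{\mathbb{R}^+\iota^*e_i^*\mid\langle\iota^*e_i^*,\mathrm{Lie}\,G_s\rangle=0\}$''). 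Your treatment of (2) is likewise correct and appropriately more careful than the bare statement: a regular value only yields local freeness, and it is the standing unimodularity (``regularity'') hypothesis on $\{u_i\}$ that upgrades the finite stabilizers $M\cap T^{S^c}$ to trivial ones, exactly as in Delzant's and Bielawski--Dancer's smoothness arguments; with freeness and properness in hand, the quotient manifold theorem and the Hitchin--Karlhede--Lindstr\"om--Ro\v{c}ek theorem finish the proof.
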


Similarly, we define the connected components of $(\mathfrak{m}^*
\times \mathfrak{m}^*_\mathbb{C})_{reg}$ to be the chambers. Example
\ref{ex:hyp:chamber} illustrates that although it has the same
expression with the toric one, the underlining structure is
different.

\begin{example}\label{ex:hyp:chamber}
Take $\beta=0$, consider the toric hyperk\"{a}hler varieties
corresponding to the two arrangements in Example
\ref{ex:fanchanged}. We draw their real part chambers in
$\mathfrak{m}^*$ in Figure \ref{fig:hyp:chamber}.
\end{example}

\begin{figure}[h]
\centering \subfigure[the chamber of Example \ref{ex:2dim:arr}]{
\label{fig:hyp:chamber:a} %% label for first subfigure
\includegraphics[width=2.1in]{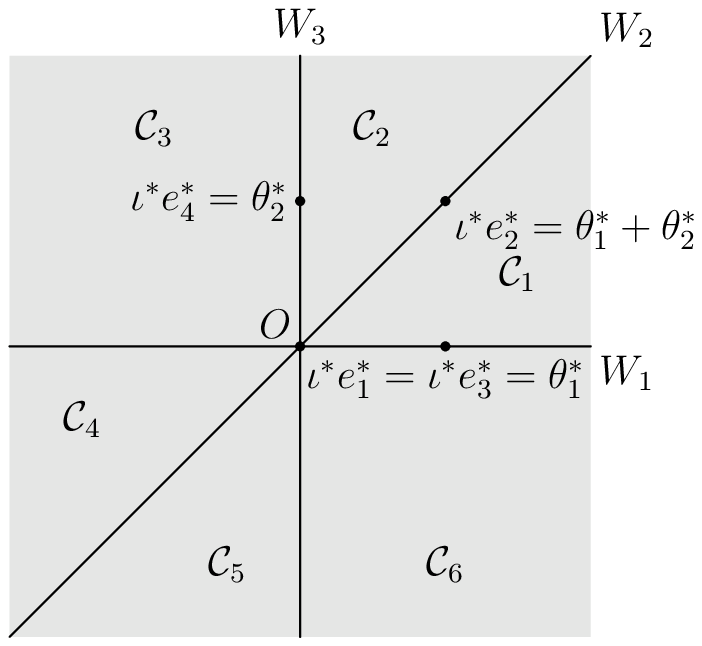}}
\hspace{0.3in} \subfigure[the chamber of same arrangement but
different orientation in Example \ref{ex:fanchanged}]{
\label{fig:hyp:chamber:b} %% label for second subfigure
\includegraphics[width=2.3in]{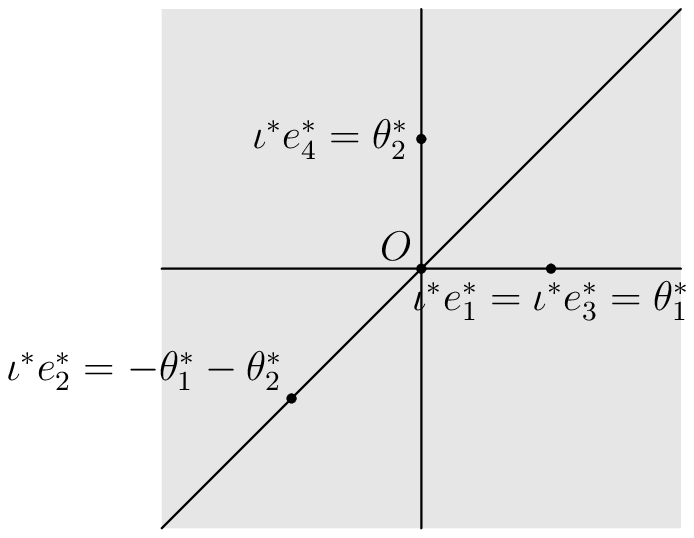}}
\caption{two identical chambers, compare with Figure
\ref{fig:tor:chamber}}
\label{fig:hyp:chamber} %% label for entire figure
\end{figure}

\subsection{Extended core and core}

In this part, $\beta$ is taken to be zero. It is enough to merely
consider the hyperplanes arrangement $H^1$. We will abuse the
notation using $H$ in stead of $H^1$.

The subset
of $Y(\alpha,0)$
\begin{equation*}
Z=\bar{\mu}_\mathbb{C}^{-1}(0)=\{[z,w] \in Y(\alpha,0)| z_i w_i=0 \
\text{for all} \ i\},
\end{equation*}
is called the extended core by Proudfoot(cf. \cite{Pr04}), which
naturally breaks into components
\begin{equation*}
Z_\epsilon=\{[z,w] \in Y(\alpha,0)| w_i=0 \ if \ \epsilon(i)=1 \ and
\ z_i=0 \ if \ \epsilon(i)=-1\}.
\end{equation*}
The variety $Z_\epsilon \subset Y(\alpha,0)$ is a $n$-dimensional
isotropy K{\"a}hler subvariety of $Y(\alpha,0)$ with an effective
hamiltonian $T^n$-action, hence a toric variety itself. It is not
hard to see this is just the toric variety corresponding to the
oriented hyperplane arrangement $\mathcal{A}_\epsilon$. Denote the
associated polytope of $\mathcal{A}_\epsilon$ as $\Delta_\epsilon$.
The set $Z_{cpt}=\bigcup_{\epsilon \in \Theta_{cpt}} Z_\epsilon$,
where $\Theta_{cpt}=\{\epsilon|\Delta_\epsilon \ \text{bounded}\}$,
is called the core, union of compact toric varieties
$X(\mathcal{A}_\epsilon), \epsilon \in \Theta_{cpt}$.

\section{Variation of toric hyperk\"{a}hler variety}

\subsection{GIT quotient}
It's time to establish the GIT quotient description of toric
hyperk\"{a}hler variety.
%\subsubsection{Definition}
Consider the GIT quotient of the affine variety
$\mu_{\mathbb{C}}^{-1}(\beta)$ by $M_\mathbb{C}$ with respect to the
linearization on the trivial line bundle induced by $\alpha \in
\mathfrak{m}^*_\mathbb{Z}$. Denote the set of $\alpha$-semi-stable
points in $\mu_{\mathbb{C}}^{-1}(\beta)$ by
$\mu_{\mathbb{C}}^{-1}(\beta)^{\alpha-ss}$, then there is a
categorical quotient $\phi: \mu_{\mathbb{C}}^{-1}(\beta)^{\alpha-ss}
\rightarrow
\mu_{\mathbb{C}}^{-1}(\beta)^{\alpha-ss}//M_{\mathbb{C}}$. Parallel
with toric case, the stability condition can be generalized to any
$\alpha \in \mathfrak{m}^*$(cf. \cite{Ko08}).

\begin{definition}
Suppose that $\alpha \in \mathfrak{m}^*$,

\noindent (1) A point $(z,w) \in \mu_{\mathbb{C}}^{-1}(\beta)$ is
$\alpha$-semi-stable if and only if
\begin{equation}\label{eq:hyp:sta}
\alpha \in \sum^d_{i=1} \mathbb{R}_{\geq 0}|z_i|^2 \iota^* e_i^* +
\sum^d_{i=1}\mathbb{R}_{\geq 0}|w_i|^2 (-\iota^* e_i^*).
\end{equation}

\noindent (2) Suppose $(z,w) \in
\mu_{\mathbb{C}}^{-1}(\beta)^{\alpha-ss}$. Then the
$M_\mathbb{C}$-orbit through $(z,w)$ is closed in
$\mu_{\mathbb{C}}^{-1}(\beta)^{\alpha-ss}$ if and only if
\begin{equation}\label{eq:hyp:clo}
\alpha \in \sum^d_{i=1} \mathbb{R}_{> 0}|z_i|^2 \iota^* e_i^* +
\sum^d_{i=1}\mathbb{R}_{> 0}|w_i|^2 (-\iota^* e_i^*).
\end{equation}
\end{definition}

And then

\begin{lemma}
For any point $p \in
\mu_{\mathbb{C}}^{-1}(\beta)^{\alpha-ss}//M_{\mathbb{C}}$, the fiber
$\phi^{-1}(p)$ consists of finitely many $M_{\mathbb{C}}$-orbits.
Moreover, each fiber contains the unique closed
$M_{\mathbb{C}}$-orbits in
$\mu_{\mathbb{C}}^{-1}(\beta)^{\alpha-ss}$. Thus the categorical
quotient $\mu_{\mathbb{C}}^{-1}(\beta)^{\alpha-ss}//M_{\mathbb{C}}$
can be identified with a set of closed $M_{\mathbb{C}}$-orbits in
$\mu_{\mathbb{C}}^{-1}(\beta)^{\alpha-ss}$.
\end{lemma}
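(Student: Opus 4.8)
The plan is to treat this as an instance of affine Geometric Invariant Theory for the reductive group $M_\mathbb{C}$, and then to add the torus-specific finiteness by hand. First I would record the two structural inputs: the level set $\mu_\mathbb{C}^{-1}(\beta)$ is an affine variety, being the common zero locus of the polynomial equations packaged in $\mu_\mathbb{C}(z,w)=\beta$, and $M_\mathbb{C}\cong(\mathbb{C}^\times)^m$ is reductive. Mumford's theory (cf. \cite{MFK94}, or \cite{Do03}, \cite{Th06}) then applies verbatim: the categorical quotient $\phi$ is a good quotient, so it is surjective, constant on orbits, and separates disjoint closed invariant sets. Concretely, for $\alpha$-semistable points $x,y$ one has $\phi(x)=\phi(y)$ if and only if $\overline{M_\mathbb{C}\cdot x}\cap\overline{M_\mathbb{C}\cdot y}\neq\emptyset$, all closures being taken inside $\mu_\mathbb{C}^{-1}(\beta)^{\alpha-ss}$.

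From this I would deduce existence and uniqueness of a closed orbit in each fiber by the standard formal argument. Each fiber $\phi^{-1}(p)$ is a nonempty closed $M_\mathbb{C}$-invariant set; an orbit of minimal dimension inside it has lower-dimensional boundary lying in the same fiber, so minimality forces that boundary to be empty and the orbit to be closed. For uniqueness, if $O_1,O_2$ were two closed orbits in the same fiber then $\overline{O_1}\cap\overline{O_2}=O_1\cap O_2\neq\emptyset$ by the separation property, and distinct orbits are disjoint, whence $O_1=O_2$. This already yields the final identification of $\mu_\mathbb{C}^{-1}(\beta)^{\alpha-ss}//M_\mathbb{C}$ with the set of closed orbits, since $\phi$ is then a bijection between fibers and closed orbits.

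The remaining and, I expect, main point is the finiteness of orbits in a single fiber. Here I would stratify $\mathbb{H}^d=\mathbb{C}^d\times\mathbb{C}^d$ by the vanishing pattern $(\mathrm{supp}(z),\mathrm{supp}(w))$ of the coordinates, of which there are only finitely many. Passing to a point of an orbit closure can only enlarge the vanishing pattern, so every orbit occurring in $\phi^{-1}(p)$ degenerates to the unique closed orbit, and the patterns that occur are squeezed between that of the closed orbit and those of the semistable points of the fiber. The crucial torus-specific claim is then that a fixed fiber meets each stratum in at most one $M_\mathbb{C}$-orbit: because the $M_\mathbb{C}$-invariants are spanned by monomials in $z_i,w_i$, and on a fixed stratum these monomials separate $M_\mathbb{C}$-orbits, the value of $\phi$ together with the stratum pins down the orbit. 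Finitely many strata then give finitely many orbits.

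The hard part will be this last separation statement, which is the genuinely torus-theoretic input; I would extract it from the monomial description of the invariant ring together with the moment relations $\mu_\mathbb{C}=\beta$, exactly as in the affine toric computation cited from \cite{Ko08}, and it is also where the stability and closedness criteria (\ref{eq:hyp:sta}) and (\ref{eq:hyp:clo}) enter, translating support patterns into the cone conditions on $\alpha$. The unique-closed-orbit and fiber-separation statements, by contrast, are formal consequences of the good-quotient property and require no special structure, so the essential labor is concentrated in the finiteness step.
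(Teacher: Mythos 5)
The paper never proves this lemma: exactly as with its toric counterpart in Section 3, which is introduced by the sentence ``We cite several basic properties of GIT quotient without proof,'' it is quoted as a standard fact of affine GIT following \cite{MFK94} and \cite{Ko08}. So there is no argument in the paper to compare yours against, and your proposal has to stand on its own. Its formal part does stand: $\mu_\mathbb{C}^{-1}(\beta)$ is affine, $M_\mathbb{C}$ is reductive, $\phi$ is a good quotient, the minimal-dimension argument produces a closed orbit in every fiber, the separation property forces its uniqueness, and the identification of the categorical quotient with the set of closed orbits follows. That is also the only part of the lemma the paper actually uses later, when it identifies $Y(\alpha,\beta)$ with the GIT quotient.

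The finiteness step, however, is genuinely broken, and at exactly the point you single out as the crucial torus-specific claim. It is not true that the polynomial invariants separate $M_\mathbb{C}$-orbits inside a support stratum: orbits in a stratum are separated by the $M_\mathbb{C}$-invariant \emph{Laurent} monomials in the nonvanishing coordinates, and these need not be restrictions of polynomial invariants, because negative exponents cannot in general be cleared. The paper's own Example \ref{ex:desingular} is a counterexample: there $d=2$, $M_\mathbb{C}=\mathbb{C}^\times$ acts diagonally on $\mathbb{H}^2$, $\beta=0$, $\alpha=0$, so every point of $\mu_\mathbb{C}^{-1}(0)$ is semistable and the polynomial invariants are generated by the $z_iw_j$. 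On the stratum $\{z_1\neq 0,\ z_2\neq 0,\ w_1=w_2=0\}$ all of these vanish identically, yet that stratum is the union of the infinitely many distinct orbits $\{(ta_1,ta_2,0,0): t\in\mathbb{C}^\times\}$, $[a_1:a_2]\in\mathbb{C}P^1$, which are separated by the Laurent invariant $z_1/z_2$ but by no polynomial invariant. Consequently your ``at most one orbit per stratum per fiber'' claim fails, and in fact the finiteness assertion of the lemma itself fails in this generality: the fiber of $\phi$ over the cone point of $Y(0,0)=\mathbb{C}^2/\mathbb{Z}_2$ is $\{z=0\}\cup\{w=0\}$, an infinite union of orbits. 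The same failure occurs in every wall situation the paper studies, since for $\alpha_1$ on a wall the whole $\mathbb{C}P^{\# J_1-1}$ fiber of $\pi^+|_{V^+}$ is an infinite family of distinct $M_\mathbb{C}$-orbits lying in a single fiber of $\phi$ once $\# J_1\geq 2$. So no refinement of your separation argument can close the gap: finiteness of orbits per fiber is simply unavailable for arbitrary $\alpha$ (it does hold for regular $\alpha$, where every semistable orbit is already closed and each fiber is a single orbit), and the honest, provable content of the lemma is the existence and uniqueness of a closed orbit in each fiber together with the resulting identification, which your first two paragraphs already establish.
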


Thus we can identify the symplectic quotient $Y(\alpha, \beta)$ with
the GIT quotient
$\mu_{\mathbb{C}}^{-1}(\beta)^{\alpha-ss}//M_{\mathbb{C}}$ for any
$\alpha \in \mathfrak{m}^*$.

In the remain part of this subsection, we recall results about
variation in \cite{Ko08}, and reinterpret them from the perspective
of fibred toric variety. We discuss the natural morphism in detail,
and the Mukai flop can be viewed as the adjunction of two adjoint
natural morphisms.

\subsection{Natural morphism and Mukai flop}

Under the same set up of toric case, consider the real part chamber
structure for a fixed $\beta$, $\alpha_1$ is in a generic position
of the wall $W_1$ and $\alpha^+$, $\alpha^-$ lie in the chamber
$\mathcal{C}^+$ and $\mathcal{C}^-$ beside the wall. By the
definition of stability, we have
\begin{equation*}
\mu_{\mathbb{C}}^{-1}(\beta)^{\alpha^\pm-ss}
 \subset \mu_{\mathbb{C}}^{-1}(\beta)^{\alpha_1-ss}.
\end{equation*}
Thus this inclusion induces a natural morphism from GIT quotients
$\mu_{\mathbb{C}}^{-1}(\beta)^{\alpha^\pm-ss}//M_\mathbb{C}$ to
another GIT quotient
$\mu_{\mathbb{C}}^{-1}(\beta)^{\alpha_1-ss}//M_\mathbb{C}$, which we
denote by $ \pi^\pm: (Y(\alpha^\pm, \beta), I_1) \rightarrow
(Y(\alpha_1, \beta), I_1)$. Without ambiguity we still utilize the
notation in toric case. Konno proved Theorem \ref{thm:hyp:nat}, and for the reader's convenience, we give the sketch of proof.

\noindent \textbf{The Proof of Theorem \ref{thm:hyp:nat}:} (1)
Similar with the toric case, $(\alpha_1, \beta)$ can be considered
as a regular element of $\mathfrak{m}_1 \times
\mathfrak{m}_{1\mathbb{C}}$. Likewise $V_1$ is a hyperk\"{a}hler
quotient of $\mathbb{H}^{\{1,...,d\}\backslash J_1}= \{(z,w) \in
\mathbb{H}^d | z_i = w_i = 0 \ if \ i \in J_1\}$ by $M_1$.

\noindent (2) Choosing $\theta_1 \in \mathrm{Lie} G_1$, by Equation
(\ref{eq:hyp:sta}) we can show that
$\mu_{\mathbb{C}}^{-1}(\beta)^{\alpha^+-ss}$ is exactly the points
in $\mu_{\mathbb{C}}^{-1}(\beta)^{\alpha_1-ss}$ satisfying
\begin{equation*}
\text{there exists} \ i \in J_1 \ \text{such that} \ z_i \neq 0 \
\text{if} \ i \in J^+_1 \ \text{or} \ w_i \neq 0 \ \text{if} \ i \in
J^-_1.
\end{equation*}
It is also easy to see that, if $(z,w) \in
\mu_{\mathbb{C}}^{-1}(\beta)^{\alpha^+-ss}$, then
\begin{equation*}
[z,w] \in V_1 \  \text{is equivalent to} \ w_i = 0 \ \text{for} \ i
\in J^+_1 \ \text{and} \ z_i = 0 \ \text{for} \ i \in J^-_1.
\end{equation*}
Thus the fiber of $\pi^+|_{V^+} : V^+ \rightarrow V_1$ is
biholomorphic to $(\mathbb{C}^{\#J_1} \backslash
\{0\})/G_{1\mathbb{C}}$, i.e. $\mathbb{C}P^{\#J_1-1}$. Same thing
happens for $\alpha^-$.

\noindent (3) Same with toric case, reader could consult
\cite{Ko08}. \qed

Konno also studied the cross wall phenomena, and show that it turns
out to be the Mukai's elementary transform.

\begin{theorem}
Assume $\alpha^+ \in \mathcal{C}^+$ and $\alpha^- \in \mathcal{C}^-$
at different sides of wall $W_1$, we can relate $(Y(\alpha^+,
\beta), I_1)$ to $(Y(\alpha^-, \beta), I_1)$ by a Mukai flop.
Especially, If $\# J_1 = 2$, there exists a biholomorphic map
$\varphi : (Y(\alpha^+, \beta), I_1) \rightarrow (Y(\alpha^-,
\beta), I_1)$ satisfying $\pi^+= \pi^- \circ \varphi$.

\end{theorem}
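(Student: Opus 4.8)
The plan is to produce $\varphi$ by gluing the tautological identification on the common semistable locus to a fibrewise projective identification over the exceptional sets, and to recognise the resulting birational map as Mukai's elementary transform whose transverse model is the $\mathbb{C}^\times$ hyperk\"ahler quotient of $\mathbb{H}^{\#J_1}$. First I would set $\psi=(\pi^-)^{-1}\circ\pi^+$. By Theorem~\ref{thm:hyp:nat}(3) both $\pi^\pm$ restrict to biholomorphisms off $V^\pm$, so $\psi\colon Y(\alpha^+,\beta)\setminus V^+\to Y(\alpha^-,\beta)\setminus V^-$ is a biholomorphism; on the locus that is simultaneously $\alpha^+$- and $\alpha^-$-semistable it is literally the identification of one and the same $M_{\mathbb{C}}$-orbit. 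This already exhibits a birational map, biregular in codimension one, and the theorem amounts to reading off its structure along $V^\pm$.

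Second I would pin down that transverse structure. By the stability analysis in Theorem~\ref{thm:hyp:nat}(2), $V^+$ is the $\mathbb{P}^{\#J_1-1}$-bundle over $V_1$ cut out by $w_i=0$ for $i\in J_1^+$ and $z_i=0$ for $i\in J_1^-$, with fibre the projectivisation of the $\{z_i\}_{i\in J_1^+}\oplus\{w_i\}_{i\in J_1^-}$ directions, while $V^-$ is cut out by the complementary equations. The holomorphic symplectic form $\omega_{I_1}=\sum_i dz_i\wedge dw_i$ pairs each fibre direction of $V^+$ (an index in $J_1^+$ carried by a $z$, or one in $J_1^-$ carried by a $w$) with exactly one transverse direction (the same index carried by the conjugate coordinate), so the normal bundle of $V^+$ is the relative cotangent bundle of $V^+\to V_1$ and $V^-\to V_1$ is the dual projective bundle. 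This is precisely the local model of Mukai's elementary transform along a $\mathbb{P}^{\#J_1-1}$-bundle, which proves the first assertion.

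Third, for $\#J_1=2$ I would make the slice explicit. Writing $J_1=\{a,b\}$ with $J_1^+=\{a\}$, $J_1^-=\{b\}$, the group $G_{1\mathbb{C}}=\mathbb{C}^\times$ acts on $(z_a,w_a,z_b,w_b)$ with weights $(1,-1,-1,1)$; non-emptiness of $V_1$ forces the $G_1$-fixed locus to meet $\mu_{\mathbb{C}}^{-1}(\beta)$, hence $\langle\beta,\theta_1\rangle=0$, i.e. $z_aw_a=z_bw_b$. The $\mathbb{C}^\times$-invariants are then $A=z_aw_a\,(=z_bw_b)$, $C=z_az_b$, $D=w_aw_b$ subject to $A^2=CD$, so along $V_1$ the base $Y(\alpha_1,\beta)$ is a family of $A_1$ surface singularities. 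The minimal resolution of an $A_1$ singularity is unique, so $\pi^+$ and $\pi^-$ realise one and the same fibrewise resolution; I would therefore define $\varphi$ to equal $\psi$ off $V^+$ and, on $V^+$, to be the fibrewise linear isomorphism $\mathbb{P}^1\cong(\mathbb{P}^1)^\vee$ obtained as the limit of $\psi$ (concretely, the identification of $[z_a:w_b]$ with $[z_b:w_a]$ read off from the relation $z_aw_a=z_bw_b$ along a curve degenerating onto $V^+$). The identity $\pi^+=\pi^-\circ\varphi$ is then immediate, since over the exceptional locus both sides are the projection to $V_1$ and elsewhere $\varphi=\psi$.

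The main obstacle is the holomorphicity of this extension across $V^+$ in the last step: one must check that the limit of the tautological identification is exactly the stated projective identification, and that the two $\mathbb{P}^1$-bundle charts $V^+$ and $V^-$ patch into the atlas of a single complex manifold rather than merely matching set-theoretically. This is exactly where the uniqueness of the $A_1$ resolution is used, and where the argument genuinely degenerates from the non-trivial Mukai flop one obtains when $\#J_1>2$ (whose transverse model $T^*\mathbb{P}^{\#J_1-1}$ admits two distinct small resolutions) to an honest biholomorphism.
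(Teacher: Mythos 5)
Before comparing: the paper contains no proof of this theorem. It is stated as a quotation of Konno's results (\cite{Ko03}, \cite{Ko08}) and the text proceeds immediately to Theorem \ref{thm:restri}, so your proposal can only be measured against Konno's own arguments (which are carried out by explicit analysis of the semistable loci and the quotients $\mu_{\mathbb{C}}^{-1}(\beta)^{\alpha-ss}//M_{\mathbb{C}}$), not against anything written here. That said, most of your reconstruction is sound, and it is the natural symplectic-geometric route: $\psi=(\pi^-)^{-1}\circ\pi^+$ is biholomorphic off $V^\pm$ by Theorem \ref{thm:hyp:nat}(3); the fibres of $V^+\to V_1$ are isotropic for $\omega_{I_1}$ (each fibre direction is a $z_i$ with $i\in J_1^+$ or a $w_i$ with $i\in J_1^-$, and $\omega_{I_1}$ only pairs conjugate coordinates), so your pairing argument correctly yields $N_{V^+}\cong T^*_{V^+/V_1}$ and exhibits $V^-$ as the dual projective bundle; and in the $\#J_1=2$ case the weights $(1,-1,-1,1)$ (which uses the standing regularity assumption on $\mathcal{A}$), the vanishing $\langle\beta,\theta_1\rangle=0$ forced by $V_1\neq\emptyset$, the invariant-theory computation $A^2=CD$, and the uniqueness of the minimal resolution of an $A_1$ singularity are all correct; this matches Example \ref{ex:desingular} of the paper and does give $\pi^+=\pi^-\circ\varphi$.

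The genuine gap is the last sentence of your second step: ``this is precisely the local model of Mukai's elementary transform\dots which proves the first assertion.'' Knowing that $V^+\subset Y(\alpha^+,\beta)$ has normal bundle $T^*_{V^+/V_1}$ shows that the elementary transform of $Y(\alpha^+,\beta)$ along $V^+$ \emph{can be performed}; it does not show that the result is $Y(\alpha^-,\beta)$, i.e.\ that $\psi$ \emph{is} that transform. An isomorphism away from loci of codimension $\geq 2$ never by itself identifies two birational models --- that is the whole point of a flop --- so you must in addition identify the closure of the graph of $\psi$ inside $Y(\alpha^+,\beta)\times_{Y(\alpha_1,\beta)}Y(\alpha^-,\beta)$ with the incidence blowup $\mathrm{Bl}_{V^+}Y(\alpha^+,\beta)\cong\mathrm{Bl}_{V^-}Y(\alpha^-,\beta)$. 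The pleasant point is that the missing verification is exactly the general-$\#J_1$ form of the relation you already exploit when $\#J_1=2$: writing $\zeta=\bigl((z_i)_{i\in J_1^+},(w_i)_{i\in J_1^-}\bigr)$ for the fibre coordinates of $V^+=\mathbb{P}(E)$ and $\eta=\bigl((w_i)_{i\in J_1^+},(z_i)_{i\in J_1^-}\bigr)$ for those of $V^-$, the level-set equation $\langle\mu_{\mathbb{C}},\theta_1\rangle=\langle\beta,\theta_1\rangle=0$ reads $\sum_{i\in J_1^+}z_iw_i-\sum_{i\in J_1^-}z_iw_i=0$, which (after absorbing the signs into the identification of $E^*$) is precisely the incidence relation between $[\zeta]$ and $[\eta]$; hence limit points of graphs of orbits land on the incidence variety, and one can then check the graph closure is the blowup. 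With that inserted (or cited from \cite{Ko08}), your argument is complete. A smaller imprecision: your claim in step one that $\psi$ is ``biregular in codimension one'' is false exactly when $\#J_1=2$, where $V^\pm$ are divisors; it is harmless only because in that case you later prove $\psi$ extends to a global biholomorphism, but it should not be asserted as a consequence of step one alone.
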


There is no boundary wall and in the interior wall case, toric
hyperk\"{a}hler version is much simpler than the toric one, for all the $J_1$ will contribute to the fiber rather than only the
$J_1^+$. Now, we are ready to prove Theorem \ref{thm:restri}, and
$\beta$ is set to be zero.

\subsection{Relation with fibred toric variety}

Take $\beta=0$, consider the natural morphism between
$4n$-dimensional toric hyper{\"a}hler varieties $\pi^\pm:
Y(\alpha^\pm,0) \rightarrow Y(\alpha_1,0)$, it can be encoded as the
variation of a regular hyperplane arrangement $\mathcal{A}$ to a
non-simplicial one $\mathcal{A}_1$. There are some ploytopes belong
to the hyperplane arrangement $\mathcal{A}$ vanish in this
procedure. These ploytopes are fibred polytopes corresponding to
fibred toric varieties. More precisely, the singular set of
$\mathcal{A}$ is a $n-(\# J_1-1)$ dimensional arrangement
$\mathcal{S}$ constituted by $d-\# J_1$ hyperplanes, then $V_1$ is
the toric hyperk{\"a}hler variety defined by $\mathcal{S}$. Denoting
$\tilde{\epsilon}$ a map form $\{1,\dots, d-\# J_1\}$ to $\{-1,1\}$,
we can state our main theorem as

\begin{theorem*}

Let $Z_1$ be the extended core of $V_1$ which is the toric varieties $X(\mathcal{S}_{\tilde{\epsilon}})$ intersecting together,
Restrict $V^\pm$ these $\mathbb{C}P^r$ fiber bundles to $Z_1$, then $V^\pm|_{X(\mathcal{S}_{\tilde{\epsilon}})}$ the $\mathbb{C}P^r$ fiber bundles over each $X(\mathcal{S}_{\tilde{\epsilon}})$ are all fibred toric varieties of complex dimension $n$.

\end{theorem*}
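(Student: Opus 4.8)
The plan is to reduce $V^\pm|_{X(\mathcal{S}_{\tilde{\epsilon}})}$ to an explicit toric GIT quotient and then read off its fibred structure from an exact sequence of tori. Write $r=\#J_1-1$ and $s=n-r$. By Theorem \ref{thm:hyp:nat} the projection $\pi^\pm|_{V^\pm}:V^\pm\to V_1$ is a $\mathbb{C}P^{r}$-bundle, and since $V_1$ is the toric hyperk\"ahler variety attached to the singular arrangement $\mathcal{S}$ (of dimension $s$), the extended core construction of Section 4 applied to $V_1$ shows each component $X(\mathcal{S}_{\tilde{\epsilon}})$ is a toric variety of complex dimension $s$. The restriction of a fiber bundle to a subvariety of its base is again a bundle, so $V^\pm|_{X(\mathcal{S}_{\tilde{\epsilon}})}\to X(\mathcal{S}_{\tilde{\epsilon}})$ is a $\mathbb{C}P^{r}$-bundle of total complex dimension $r+s=n$. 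It remains to prove that the total space is a toric variety and that the projection is equivariant in the sense of Definition \ref{def:eletor}.

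First I would describe $V^+|_{X(\mathcal{S}_{\tilde{\epsilon}})}=(\pi^+)^{-1}(X(\mathcal{S}_{\tilde{\epsilon}}))$ in coordinates. Combining the description of $V^+$ from the proof of Theorem \ref{thm:hyp:nat} ($w_i=0$ for $i\in J_1^+$ and $z_i=0$ for $i\in J_1^-$) with the extended core conditions on $J_1^c:=\{1,\dots,d\}\backslash J_1$ ($w_i=0$ or $z_i=0$ according to $\tilde{\epsilon}$), one sees that on $V^+|_{X(\mathcal{S}_{\tilde{\epsilon}})}$ exactly one of $z_i,w_i$ is forced to vanish for \emph{every} index $i$. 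Hence $z_iw_i=0$ identically, the condition $\mu_{\mathbb{C}}=0$ is automatic, and the hyperk\"ahler quotient collapses to an ordinary K\"ahler toric GIT quotient of $\mathbb{C}^d$ by $M_{\mathbb{C}}$. In other words $V^+|_{X(\mathcal{S}_{\tilde{\epsilon}})}$ is precisely one extended core component $Z_{\epsilon'}$ of $Y(\alpha^+,0)$, for the sign vector $\epsilon'$ recording which variable is active; by Section 4 this is a toric variety of complex dimension $n$. The case of $\pi^-$ is identical.

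The big torus of $Z_{\epsilon'}$ is the residual torus $N_{\mathbb{C}}=T^d_{\mathbb{C}}/M_{\mathbb{C}}$ of $Y(\alpha^\pm,0)$, of complex dimension $n$. Because the wall $W_1$ is spanned by $\{\iota^* e_i^*\mid i\in J_1^c\}$, the only elements of $\mathfrak{m}$ annihilating all of these lie in $\mathbb{R}\theta_1=\mathrm{Lie}\,G_1$, so $M_{\mathbb{C}}\cap T^{J_1}_{\mathbb{C}}=G_{1\mathbb{C}}$ and projection to the $J_1^c$-coordinates yields the exact sequence
\[
1\to T^{J_1}_{\mathbb{C}}/G_{1\mathbb{C}}\to N_{\mathbb{C}}\to N_{1\mathbb{C}}\to 1,
\]
where $N_{1\mathbb{C}}$ is the $s$-dimensional residual torus of $V_1$. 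The kernel $T^{J_1}_{\mathbb{C}}/G_{1\mathbb{C}}$ is exactly the $r$-dimensional torus acting on the fibers $\mathbb{C}P^{r}=(\mathbb{C}^{J_1}\backslash\{0\})/G_{1\mathbb{C}}$, while $N_{1\mathbb{C}}=T^s_{\mathbb{C}}$ is the torus of the base $X(\mathcal{S}_{\tilde{\epsilon}})$, and the projection intertwines the two actions via this quotient map. Since the associated sequence of cocharacter lattices is a short exact sequence of free abelian groups it splits, realizing $T^s_{\mathbb{C}}=N_{1\mathbb{C}}$ as a subtorus of $T^{r+s}_{\mathbb{C}}=N_{\mathbb{C}}$ whose action covers the base action; this is exactly the commuting square of Definition \ref{def:eletor}.

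The main obstacle I anticipate is the bookkeeping joining the second and third steps: matching the bundle $\pi^\pm|_{V^\pm}$ restricted to $X(\mathcal{S}_{\tilde{\epsilon}})$ with the toric projection induced by $N_{\mathbb{C}}\to N_{1\mathbb{C}}$, and checking that the chosen splitting gives an honest subgroup $T^s_{\mathbb{C}}\subset T^{r+s}_{\mathbb{C}}$ acting compatibly with the base. Along the way one must treat finite-isotropy subtleties in the identity $M_{\mathbb{C}}\cap T^{J_1}_{\mathbb{C}}=G_{1\mathbb{C}}$ (working with identity components, harmless under the standing regularity hypothesis) and handle noncompact $X(\mathcal{S}_{\tilde{\epsilon}})$ by the same device used for unbounded polytopes earlier in the paper. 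Once these are in place, every condition of Definition \ref{def:eletor} holds, so each $V^\pm|_{X(\mathcal{S}_{\tilde{\epsilon}})}$ is a fibred toric variety of complex dimension $n$.
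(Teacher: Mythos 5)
Your proposal is correct, and its first half coincides with the paper's own argument: both combine the coordinate description of $V^\pm$ from the proof of Theorem \ref{thm:hyp:nat} (namely $w_i=0$ for $i\in J_1^+$, $z_i=0$ for $i\in J_1^-$) with the extended core conditions on the remaining indices to conclude that the restricted bundle lies in the extended core of $Y(\alpha^\pm,0)$ and, by dimension counting, has complex dimension $n$; you sharpen this slightly by identifying $V^\pm|_{X(\mathcal{S}_{\tilde{\epsilon}})}$ with a single extended core component $Z_{\epsilon'}$, which the paper leaves implicit. Where you genuinely diverge is the final step, i.e.\ the verification that these toric varieties are \emph{fibred}. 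The paper does not check Definition \ref{def:eletor} directly: it appeals to the Section 3 characterization that a toric variety whose polytope shrinks to a lower-dimensional one during the variation, without any hyperplane crossing a vertex, is a fibred toric variety with product polytope $\Delta(r)\times F$; since the polytopes of these core components are exactly the ones that vanish as $\alpha^\pm$ moves into the wall, the conclusion follows. You instead exhibit the equivariant structure group-theoretically: the exact sequence $1\to T^{J_1}_{\mathbb{C}}/G_{1\mathbb{C}}\to N_{\mathbb{C}}\to N_{1\mathbb{C}}\to 1$, the identification $M_{\mathbb{C}}\cap T^{J_1}_{\mathbb{C}}=G_{1\mathbb{C}}$ (which holds on the nose in the paper's conventions, since $G_1$ is the full isotropy group of a point whose vanishing set is $J_1$, so your worry about finite components is unnecessary), and a splitting of the cocharacter lattice sequence producing the subtorus $T^s_{\mathbb{C}}\subset T^{r+s}_{\mathbb{C}}$ demanded by Definition \ref{def:eletor}. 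Your route is more self-contained and rigorous at this point, as it does not rest on the somewhat informal polytope-degeneration characterization of fibred toric varieties; the paper's route is shorter given Section 3 and keeps in the foreground the geometric picture that the fibred toric varieties are precisely the vanishing polytopes of the arrangement variation.
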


\noindent \textbf{The proof of Theorem \ref{thm:restri}:} Here we
only consider $V^+$, the $V^-$ case is the same. We first show the
bundle over $Z_1$ lies in the extended core $Z^+$ of $Y(\alpha^+)$.
This is merely a repeat of Konno's proof for (2) of Theorem
\ref{thm:hyp:nat}. We had already know that $[z,w] \in V^+$ is
equivalent to $w_i = 0$ for $i \in J^+_1$ and $z_i = 0$ for $i \in
J^-_1$, and the extended core of $V_1$ is $\{[z,w]\in
\mathbb{H}^{\{1,...,d\}\backslash J_1}|z_i w_i=0, \text{for} \ i\in
\{1,...,d\}\backslash J_1\}$. Combining these two facts, we know
$z_iw_i=0$ for all $i$, thus the restricted bundle lies in $Z$.
Secondly, By Theorem \ref{thm:hyp:nat}, $V^+$ is complex dimension
$2n-2(\# J_1-1)$. Hence $X(\mathcal{S}_{\tilde{\epsilon}})$ has half
dimension of $V_1$, $n-(\# J_1-1)$. While its fiber is
$(\#J_1-1)$-dimensional complex projective space, thus the dimension
counting tells us it is complex $n$-dimensional. Finally, the volume
of these toric varieties vanishing during the variation, thus must
be the fibred toric varieties associated to the fibred polytopes.
\qed

This is equivalent to say that the hyperk\"{a}hler natural morphisms
restricted to the toric varieties in the extended core, are the
natural morphisms of respect toric varieties. Especially to the
fibred toric varieties, the natural morphisms degenerate to bundle
projections. As we know, besides the bundle projection of fibred
toric varieties, there are lots of "flip" of toric varieties.
Theorem \ref{thm:restri} tells us that if we look at these flips in
its ambient toric hyperk\"{a}hler variety, then they are all
contained in nearby bundle projections of fibred toric varieties.
Thus the fibred toric varieties are both primary in the variation of
toric varieties and toric hyperk\"{a}hler varieties. Moreover,
Theorem \ref{thm:restri} directly implies

\begin{corollary}
Every smooth toric hyperk{\"a}hler manifold contain fibred toric
varieties in its extended core.
\end{corollary}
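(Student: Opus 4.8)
The plan is to follow the structure of Konno's proof of part~(2) of Theorem~\ref{thm:hyp:nat} in order to place the restricted bundle inside the extended core, then to run a dimension count, and finally to match the result against the product-polytope characterization of fibred toric varieties from Section~3. Throughout I treat only $V^+$, the case of $V^-$ being identical after interchanging the roles of $J_1^+$ and $J_1^-$. Since $\beta=0$ it suffices to work with the first factor $H^1$ of the arrangement, as in the discussion of the extended core.

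First I would show that $V^+|_{X(\mathcal{S}_{\tilde{\epsilon}})}$ lies in the extended core $Z^+$ of $Y(\alpha^+,0)$. From part~(2) of Theorem~\ref{thm:hyp:nat}, a point $[z,w]\in V^+$ satisfies $w_i=0$ for $i\in J_1^+$ and $z_i=0$ for $i\in J_1^-$, so in particular $z_iw_i=0$ for every $i\in J_1$. On the other hand, the extended core $Z_1$ of $V_1$, viewed as the hyperk\"{a}hler quotient of $\mathbb{H}^{\{1,\dots,d\}\setminus J_1}$ by $M_1$, is cut out by $z_iw_i=0$ for all $i\in\{1,\dots,d\}\setminus J_1$. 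Restricting the bundle to $Z_1$ therefore imposes $z_iw_i=0$ for all indices $i$, which is exactly the equation $\bar{\mu}_{\mathbb{C}}=0$ defining $Z^+$. Hence $V^+|_{X(\mathcal{S}_{\tilde{\epsilon}})}\subset Z^+$; being a $T^n$-invariant K\"{a}hler subvariety of a single component $Z_\epsilon$, it is a toric variety.

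Next I would carry out the dimension count. By Theorem~\ref{thm:hyp:nat} the variety $V_1$ has complex dimension $2n-2(\#J_1-1)$, so, as a toric hyperk\"{a}hler variety, its extended-core components $X(\mathcal{S}_{\tilde{\epsilon}})$ are toric varieties of half that dimension, namely $n-(\#J_1-1)$. Since the fibre of $\pi^+|_{V^+}$ is $\mathbb{C}P^{\#J_1-1}$, setting $r=\#J_1-1$ and $s=n-(\#J_1-1)$ exhibits $V^+|_{X(\mathcal{S}_{\tilde{\epsilon}})}$ as a $\mathbb{C}P^r$ bundle over the $s$-dimensional toric variety $X(\mathcal{S}_{\tilde{\epsilon}})$, of total complex dimension $r+s=n$, as claimed.

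It remains to recognize this bundle as a fibred toric variety in the sense of Definition~\ref{def:eletor}. The residual $T^s_{\mathbb{C}}$-action on the base lifts to the total space as a subtorus of $T^n_{\mathbb{C}}=T^{r+s}_{\mathbb{C}}$ and, coming from the equivariance of the natural morphism, commutes with the projection, so the required commutative square holds. To confirm that this is genuinely a fibred toric variety rather than an arbitrary toric fibre bundle, I would appeal to the polytope picture of Proposition~\ref{pro:arrcut}: as $\alpha$ runs into the wall the polytope of each $X(\mathcal{S}_{\tilde{\epsilon}})$ degenerates with its moving face shrinking to a lower-dimensional face without crossing any vertex, which forces the product form $\Delta(r)\times F$ established in Section~3. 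The main obstacle I anticipate is the bookkeeping in the first stage: one must verify that the two membership conditions combine to give $\bar{\mu}_{\mathbb{C}}=0$ across \emph{all} indices, not merely on $J_1$ or on its complement separately, since both the dimension count and the product structure of the polytope depend on the restricted bundle sitting inside the extended core.
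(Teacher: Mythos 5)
Your proposal is, in substance, the route the paper itself takes: the paper obtains this corollary by simply invoking Theorem \ref{thm:restri} (``directly implies''), and what you have written out is essentially the paper's proof of that theorem --- the same placement of $V^+|_{X(\mathcal{S}_{\tilde{\epsilon}})}$ inside the extended core of $Y(\alpha^+,0)$ by combining $z_iw_i=0$ on $J_1$ (from membership in $V^+$) with $z_iw_i=0$ on the complement of $J_1$ (from membership in the extended core of $V_1$), the same dimension count $r+s=(\#J_1-1)+(n-\#J_1+1)=n$, and the same identification with a fibred toric variety via the degenerating polytope of Proposition \ref{pro:arrcut}. (You even silently correct a slip in the paper's proof: it is $V_1$, not $V^+$, whose complex dimension is $2n-2(\#J_1-1)$.)

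The one thing your write-up does not supply --- and it is the only content the corollary has beyond Theorem \ref{thm:restri} --- is the quantifier. The corollary asserts something about \emph{every} smooth toric hyperk\"{a}hler manifold, whereas your entire argument takes place inside an already-given wall-crossing setup: $\alpha^+$, $\alpha_1$, $W_1$, $J_1^\pm$, $\mathcal{S}$ all appear as data. To prove the corollary you must start from an arbitrary smooth $Y(\alpha,0)$ (implicitly $\beta=0$, since the extended core is only defined in that case) and manufacture that setup: provided $M$ is nontrivial, i.e. $d>n$, there exists at least one wall $W_s$ --- a codimension-one subspace of $\mathfrak{m}^*$ coming from a one-dimensional isotropy group --- so the chamber containing $\alpha$ has some wall $W_1$ meeting its closure in a relatively open set; choose $\alpha_1$ generic there, set $\alpha^+=\alpha$, and only then does your argument produce fibred toric varieties inside the extended core of $Y(\alpha,0)$ itself. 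This bridging step is short (the paper glosses over it too), but without it you have proved Theorem \ref{thm:restri}, not the universal statement; note also that it genuinely requires $d>n$, since for $Y=\mathbb{H}^n$ there are no walls at all and the extended core is just a union of affine coordinate subspaces.
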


We close this article by several examples.

\begin{example}\label{ex:desingular}
We have diagonal action of $T^1$ on $\mathbb{H}^2$, trying to derive
the natural morphism from $Y(1,0)$ to $Y(0,0)$. We already know
$Y(1,0)=T^*\mathbb{C}P^1$. While in the case $\alpha=0$, for the
whole affine set $\{z_1 w_1+z_2 w_2=0\}$ is semi-stable, the GIT
quotient is just affine quotient. The invariant polynomial ring
$K^T[z_1,z_2,w_1,w_2]/\{z_1w_1+z_2w_2\}$ is of form $z_1^p z_2^q
w_1^r w_2^s$ where $p+q=r+s$, which can be generated by $X=z_1w_1$,
$Y=z_1w_2$, $Z=z_2w_1$ and $W=z_2w_2$. Thus the ring is isomorphic
to $K[X,Y,Z,W]/\{XW-YZ, X+W\}$, i.e. $K[X,Y,Z]/\{X^2+YZ\}$. Its
corresponding affine variety is the affine cone $X^2+YZ=0$ in
$\mathbb{C}^3$, isomorphic to $\mathbb{C}^2/\mathbb{Z}_2$(cf.
section 2.2 of \cite{Fu93}). It means that the natural morphism from
is just the Klein desigularization of $\mathbb{C}^2/\mathbb{Z}_2$ to
$T^*\mathbb{C}P^1$.

\end{example}

It is easy to check, the extended core of $Y(1,0)$ is the union of two copies of $\mathbb{C}$ and
$\mathbb{C}P^1$.

\begin{example}
Taking $\beta=0$, we consider the fan in Example \ref{ex:2dim:arr},
and take $\alpha=3 \theta_1^*+2\theta_2^* \in \mathcal{C}_1$ and
$\alpha_1=3 \theta_1^* \in W_1$, which corresponds to the lifts
$\lambda=(1,1,1,1)$ and $\lambda_1=(1,1,1,-1)$, then the variation
from $\alpha$ to $\alpha_1$ is equivalent to moving $H_4$ to the
superposition of $H_2$(see Figure \ref{fig:hyp:var}). In natural
morphism $\pi: Y(\alpha,0) \rightarrow Y(\alpha_1,0)$, the singular
set $V_1$ in $Y(\alpha_1,0)$ is $T^*\mathbb{C}P^1$, which
corresponds to the induced arrangement on the singular set
$H_2=H_4$, with extended core the union of two copies of
$\mathbb{C}$ and $\mathbb{C}P^1$. The exceptional set $V$ in
$Y(\alpha)$ is a $\mathbb{C}P^1$ bundle over $T^*\mathbb{C}P^1$.
Restrict it to the extended core, we get two trivial $\mathbb{C}P^1$
bundles over $\mathbb{C}$ and a Hirzebrunch surface, which are the
fibred toric varieties.

\end{example}

\begin{figure}[h]
\centering \subfigure[the fibred toric varieties associated]{
\label{fig:hyp:var:reg} %% label for first subfigure
\includegraphics[width=2.2in]{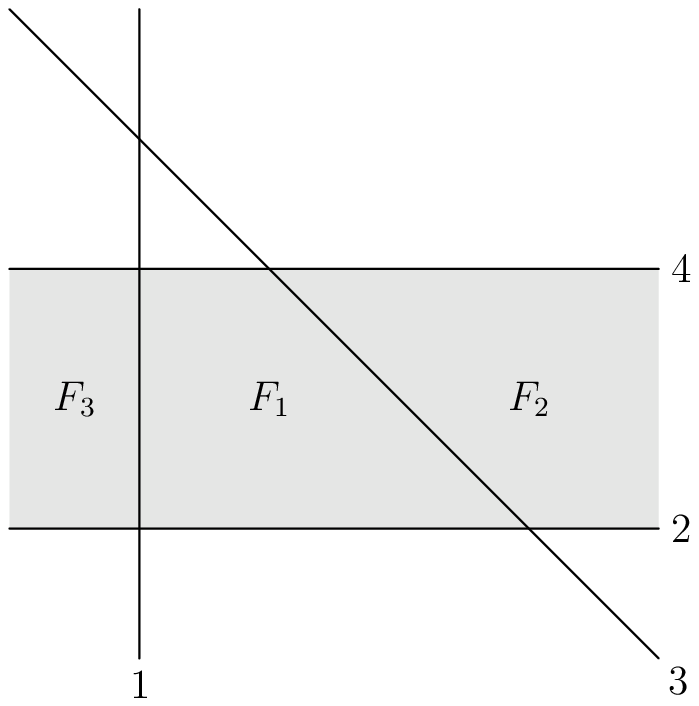}}
\hspace{0.3in} \subfigure[the non-simplicial arrangement, where ploytopes
$F_1$, $F_2$ and $F_3$ vanish]{
\label{fig:hyp:var:sin} %% label for second subfigure
\includegraphics[width=2.2in]{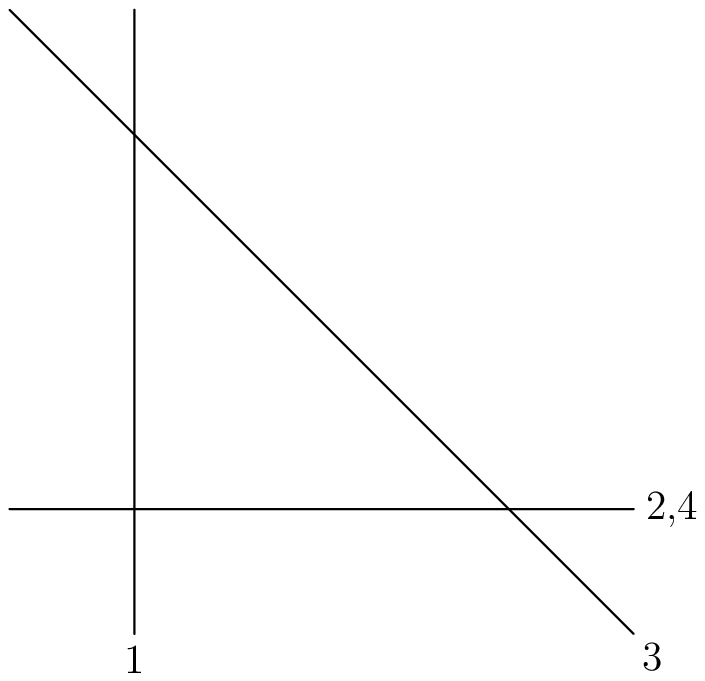}}
\caption{variation of arrangement}
\label{fig:hyp:var} %% label for entire figure
\end{figure}

\bibliographystyle{alpha}
\bibliography{hkBib}

\vfill

\noindent Craig van Coevering, Email address: craigvan@ustc.edu.cn

\

\noindent Wei Zhang, Email address: zhangw81@ustc.edu.cn

\

\noindent School of Mathematical Sciences

\noindent University of Science and Technology of China

\noindent Hefei, 230026, P. R.China

\noindent

\end{document}